\documentclass[10pt]{amsart}
\usepackage{amsmath,amssymb,amsthm,graphicx,a4wide}


\newtheorem{theorem}{Theorem}    
\newtheorem{lemma}{Lemma} 
\newtheorem{claim}{Claim}

\newtheorem{corollary}{Corollary}
\theoremstyle{definition}

\newtheorem{remark}[theorem]{Remark}
\newtheorem*{remark*}{Remark}


\title{A characterization of almost alternating knots}
\author{Tetsuya Ito}
\address{Department of Mathematics, Graduate School of Science, Osaka University \\ 1-1 Machikaneyama Toyonaka, Osaka 560-0043, JAPAN}
\email{tetito@math.sci.osaka-u.ac.jp}
\subjclass[2010]{Primary~57M25 
, Secondary~57M27}
\urladdr{http://www.math.sci.osaka-u.ac.jp/~tetito/}
\keywords{Almost alternating knot, spanning surface defect, alternating genus}
\thanks{T.I. was partially supported by JSPS Grant-in-Aid for Young Scientists (B) 15K17540.}
 
\begin{document}

\begin{abstract}
Generalizing Howie and Greene's characterization of alternating knots, we give a topological characterization of almost alternating knots.
\end{abstract}

\maketitle

\section{Introduction}

Recently, an intrinsic characterization of alternating knots and links in $S^{3}$ was given by Howie \cite{ho,hothe} and Greene \cite{gr} by using a \emph{spanning surface} of a knot $K$, an embedded connected compact surface $S$ with $\partial S = K$ that is not necessarily orientable.

\begin{theorem}[\cite{ho,gr}]
\label{theorem:altGH}
A knot $K$ in $S^{3}$ is alternating if and only if $K$ has spanning surfaces $B$ and $W$ satisfying one of the following properties.
\begin{enumerate}
\item [(1)] (Howie's characterization \cite{ho})\\ 
$\chi(B)+\chi(W)+\frac{1}{2}|\partial B, \partial W| = 2$. Here $|\partial B, \partial W|$ denotes the distance of two boundary slopes defined by $B$ and $W$.\item [(2)] (Greene's characterization \cite{gr})\\
The Gordon-Litherland pairing of $B$ and $W$ are positive and negative definite, respectively.
\end{enumerate}
\end{theorem}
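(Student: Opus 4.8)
The plan is to prove the two implications separately and to deduce the equivalence of (1) and (2) from the fact that each is itself equivalent to $K$ being alternating.

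For necessity, suppose $K$ bounds a reduced alternating diagram $D$ with $c$ crossings, and let $B$ and $W$ be its black and white checkerboard surfaces. If $D$ has $b$ black and $w$ white regions, then Euler's formula on $S^{2}$ gives $b+w=c+2$; since $B$ (resp.\ $W$) deformation retracts onto a graph with $b$ (resp.\ $w$) vertices and $c$ edges, $\chi(B)+\chi(W)=(b+w)-2c=2-c$. A direct computation from $D$ of the two boundary slopes---using that in a reduced alternating diagram every crossing has the same type with respect to a fixed checkerboard colouring, so that the Gordon--Litherland correction terms of $B$ and $W$ differ by $2c$---yields $|\partial B,\partial W|=2c$, and (1) follows. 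For (2) one uses that the Gordon--Litherland pairing of a checkerboard surface is, up to sign, the Goeritz form of $D$: in a reduced alternating diagram both Goeritz matrices are graph-Laplacian type matrices of a connected graph, so one is positive definite and the other negative definite. (The same pair $B,W$ works for both (1) and (2).)

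For sufficiency, start from spanning surfaces $B$ and $W$ of $K$ satisfying (1) or (2); the goal is to reconstruct a reduced alternating diagram of $K$ whose checkerboard surfaces are isotopic to $B$ and $W$. First isotope $B$ and $W$ rel $K$ so that $B\cap W$ is transverse away from $K$, hence a union of arcs and simple closed curves, and minimise the number of components. Innermost-disc and double-curve surgeries---carried out on the twisted $I$-bundles of the surfaces so as to respect possible non-orientability---remove all closed components, leaving $B\cap W$ a union of arcs with endpoints on $K$. These arcs cut $B$ and $W$ into faces and, together with a regular neighbourhood of $K$, organise $N(B\cup W)$ into a handlebody carrying a $4$-valent graph on its boundary. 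The crucial step is to show that the extremal hypothesis forces every component of $S^{3}\setminus N(B\cup W)$ to be a ball and the graph to be planar; one then reads off a diagram $D$ of $K$ with $B,W$ as checkerboard surfaces, and a final local analysis shows $D$ is reduced and alternating---any non-alternating or nugatory crossing could be removed by a local move on $B$ or $W$ that strictly increases $\chi(B)+\chi(W)+\tfrac12|\partial B,\partial W|$ past $2$ (respectively, destroys definiteness of one Goeritz form).

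The main obstacle is exactly this extremal analysis. It rests on first establishing the sharp universal inequality $\chi(B)+\chi(W)+\tfrac12|\partial B,\partial W|\le 2$ for \emph{any} pair of spanning surfaces of a knot---proved by an Euler-characteristic count over the cell structure induced by $B\cap W$, with careful bookkeeping of how the intersection arcs contribute to the two boundary slopes---and then characterising precisely when equality holds. In Greene's route the analogous input is lattice-theoretic: the Goeritz lattice of a spanning-surface pair embeds in a standard diagonal definite lattice, and the rigidity of such embeddings pins down the combinatorics of the diagram. Throughout, the persistent technical points are handling non-orientable surfaces so that surgery and boundary slopes behave correctly, and reducing at the outset to the case where $K$ is prime and the surfaces are connected and incompressible; granting these, equality in the numerical condition (or definiteness in Greene's) is exactly the signature of the checkerboard pair of a reduced alternating diagram, which completes sufficiency. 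Finally, since (1) and (2) have each been shown equivalent to ``$K$ is alternating'', they are equivalent to one another.
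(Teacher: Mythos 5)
The paper does not prove Theorem \ref{theorem:altGH} at all: it is quoted as an external input from Howie \cite{ho} and Greene \cite{gr}, so there is no in-paper argument to compare yours against. Judged on its own, your necessity direction is correct and standard (the count $\chi(B)+\chi(W)=2-c$, the slope difference $2c$ coming from all crossings having the same checkerboard type, and the definiteness of the two Goeritz forms of a reduced alternating diagram), and your overall architecture for sufficiency --- standardize $B\cap W$, collapse $B\cup W$ to a closed surface $F$, and use $\chi(F)=\chi(B)+\chi(W)+\tfrac12|\partial B,\partial W|\le 2$ with equality forcing $F$ to be a sphere carrying an alternating diagram --- is indeed the skeleton of Howie's proof (and is the same mechanism the present paper adapts in its proof of Theorem \ref{theorem:mainaa}, where the analogous count appears in Claim \ref{claim:achi}).

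However, as a proof the sufficiency half has genuine gaps: everything you label ``the crucial step'' or ``the main obstacle'' is precisely the content of the two cited papers and is asserted rather than established. Concretely: (a) the standardization of the arcs of $B\cap W$ so that each looks like the Menasco crossing-ball model (Howie's Lemma~1, invoked in this paper as property (St)) is a nontrivial lemma, and without it the identification of $\chi(B\cap W)$ with $-\tfrac12|\partial B,\partial W|$ fails; (b) you never show the collapsed surface $F$ is embedded (or extract a diagram from an immersed $F$), which is where most of the work lies --- note that in the present paper the analogous embeddedness statement (Claim \ref{claim:emb}) needs the extra almost-compressing-disk hypotheses and is false in general for the torus case (Remark \ref{remark:tor}), so one should not expect it to be free; (c) the Greene route is compressed to one sentence about lattice rigidity. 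There is also a logical slip in your extremal analysis: if the universal inequality is $\le 2$, no local move can push the quantity ``past $2$,'' so that mechanism cannot rule out bad crossings; in any case the theorem only asks for an alternating diagram, not a reduced one, and the sphere produced by the equality case already carries one. In short: right blueprint, but the load-bearing steps are missing.
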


For a spanning surface $S$, \emph{the Gordon-Litherland pairing} $\langle \;,\; \rangle_{S}$ 
is the symmetric bilinear form on $H_{1}(S)$ defined by $\langle a,b \rangle_{S} = lk(\alpha,p_{S}^{-1}(\beta))$. Here $p_{S}: \nu(S) \rightarrow S$ is the double covering from the unit normal $\nu(S)$ of $S$, and $\alpha$ and $\beta$ denote (multi) curves on $S$ that represent $a$ and $b$. Let $e(S)=-\langle [K],[K]\rangle$ be the \emph{euler number} of $S$, which is equal to the twice of the boundary slope of $S$. Since $\sigma(K)=\sigma(S)+\frac{1}{2}e(S)$ \cite[Corollary 5]{gl}, where $\sigma(K)$ denotes the signature of $K$, two characterizations are essentially the same although they look different at first glance.

Inspired from Howie and Greene's argument, we explore a similar characterization for almost alternating knots. A knot $K$ is \emph{almost alternating} if $K$ is represented by an \emph{almost alternating diagram}, a diagram such that a single crossing change makes the diagram alternating (see \cite{a+} for its basic properties). 
In this paper we regard an alternating knot as a special case of an almost alternating knot.

For a spanning surface $S$, we say that an embedded disk $D$ in $S^{3}$ is an \emph{almost compressing disk} of $S$, if $D$ has the following properties.
\begin{enumerate}
\item The knot $K=\partial S$ transversely intersect with the interior of $D$ at one point.
\item $D$ transversely intersects with $S$, except one point $p_{D} \in \partial D$. At the point $p_{D}$, $D$ has a saddle tangency with $S$.  
\item The intersection $D \cap S$ is a union of $\partial D$ and a simple arc $\gamma_{D}$ connecting $p_{D}$ and the unique intersection point $K\cap D$. We call this arc the \emph{intersection arc} of $D$ (See Figure \ref{fig:acd} (a)).
\end{enumerate}

\begin{figure}[htb]
\begin{center}
\includegraphics*[bb=76 613 337 737, width=80mm]{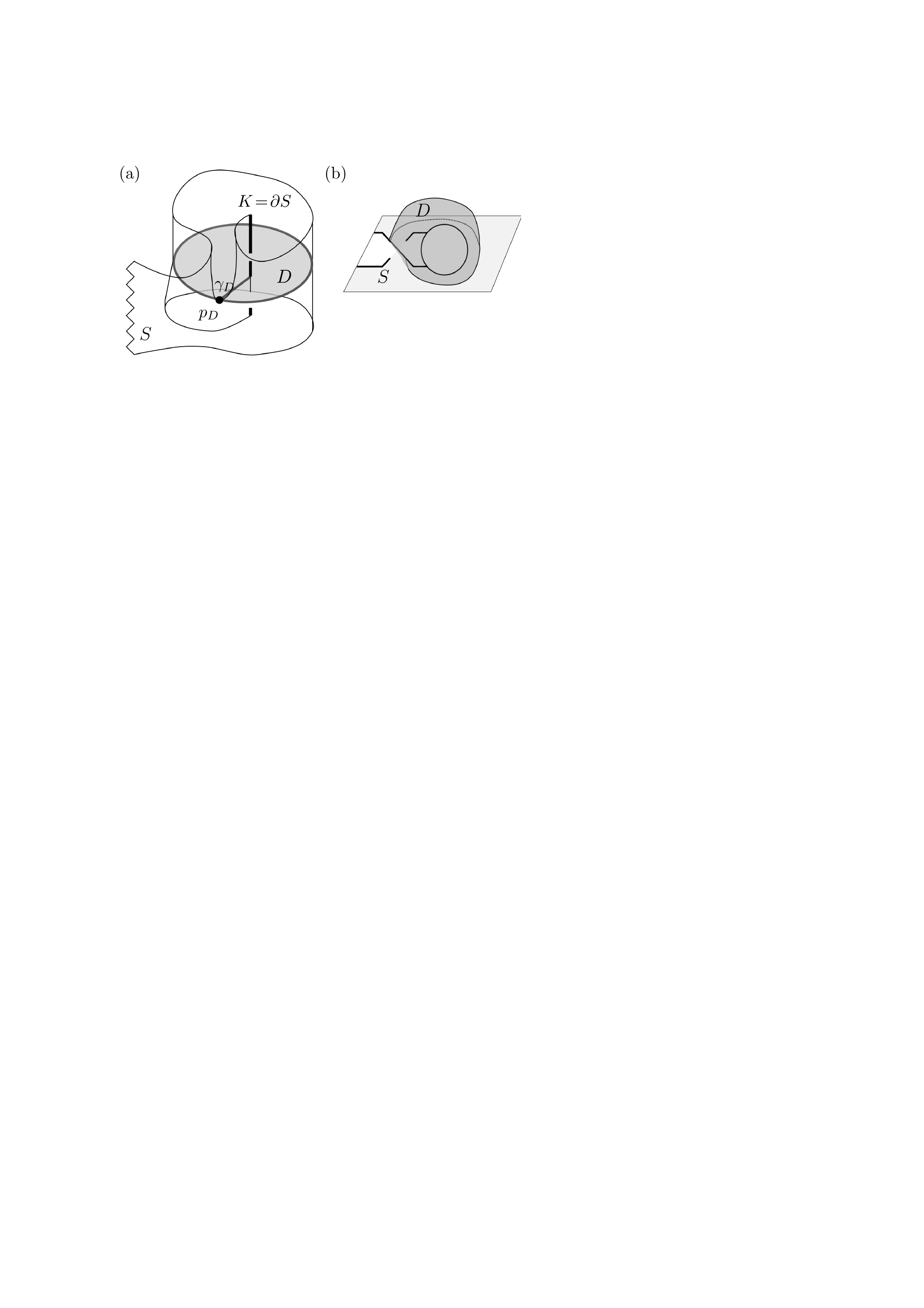}
\caption{Almost compressing disk for spanning surface}
\label{fig:acd}
\end{center}
\end{figure}

A typical situation where an almost compressing disk appears is a checkerboard surface of a knot diagram with a reducible crossing.
A crossing $c$ in a knot diagram is a \emph{reducible crossing} if there is a circle $\delta$ in the projection plane which transversely intersects the diagram at one point $c$. Let $D$ be the disk bounded by such a circle $\delta$ lying in the upper half space. After a slight perturbation near $c$, $D$ gives an almost compressing disk of a checkerboard surface of the diagram (See Figure \ref{fig:acd} (b)).

Using spanning surfaces, their Gordon-Litherland pairings and almost compressing disks, our characterization of almost alternating knots is stated as follows. (Although our characterization can be generalized for links (See Remark \ref{remark:link}), throughout the paper we will mainly treat a knot case for sake of simplicity.)

\begin{theorem}
\label{theorem:mainaa}
A knot $K$ in $S^{3}$ is almost alternating if and only if $K$ has spanning surfaces $B$ and $W$ which intersect transversely, such that
\begin{enumerate}
\item[(i)] $b_{1}(W)+b_{1}(B)-|\sigma(W)-\sigma(B)| \leq 2.$
\item[(ii)] There exist an almost compressing disk $D_{B}$ of $B$ and an almost compressing disk $D_{W}$ of $W$ such that 
\begin{enumerate}
\item[(ii-a)] $D_{B} \cap D_{W}$ transversely intersects at exactly one clasp intersection.
\item[(ii-b)] The clasp intersection $D_{B} \cap D_{W}$  is equal to $\gamma_{D_B} \cap \gamma_{D_W}$.
\item[(ii-c)] The union of intersection arc $\gamma_{D_B} \cup \gamma_{D_W}$ is contained in $B \cap W$.
\end{enumerate} 
\end{enumerate}
\end{theorem}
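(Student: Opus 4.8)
The plan is to reduce both directions to Howie and Greene's Theorem~\ref{theorem:altGH}. For a pair of spanning surfaces $B,W$ of $K$ I would work with the \emph{spanning surface defect}
\[
\delta(B,W)=b_{1}(B)+b_{1}(W)-|\sigma(W)-\sigma(B)|.
\]
Two preliminary facts drive everything. First, $\delta(B,W)\ge 0$ because $|\sigma(S)|\le b_{1}(S)$, and $\delta(B,W)$ is even because the Gordon--Litherland form of a spanning surface of a \emph{knot} is nondegenerate (its determinant is $\pm\det K\neq 0$, \cite{gl}), so $\sigma(S)\equiv b_{1}(S)\pmod 2$; hence condition~(i) is equivalent to $\delta(B,W)\in\{0,2\}$. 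Second, combining the two formulations of Theorem~\ref{theorem:altGH} through $\sigma(K)=\sigma(S)+\tfrac12 e(S)$ (which gives $\sigma(W)-\sigma(B)=\tfrac12(e(B)-e(W))$), the condition in that theorem is exactly $\delta(B,W)=0$: indeed $\langle\cdot,\cdot\rangle_{B}$ positive definite and $\langle\cdot,\cdot\rangle_{W}$ negative definite force $\sigma(B)=b_{1}(B)$ and $\sigma(W)=-b_{1}(W)$, so $\delta(B,W)=0$, and conversely $\delta(B,W)=0$ together with $|\sigma(S)|\le b_{1}(S)$ forces both forms to be definite. So $K$ is alternating iff it has spanning surfaces with $\delta=0$.

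For the ``only if'' direction I would start from a connected reduced almost alternating diagram $\mathcal D$ of $K$ with dealternator $c$ (first passing to such a diagram if $K$ is alternating) and take $B,W$ to be its checkerboard surfaces. Here $b_{1}(B)+b_{1}(W)=n$, the crossing number, and $e(S)=-2\mu(S)$ where $\mu(S)$ counts the crossings of the distinguished Gordon--Litherland type with respect to $S$, with $\mu(B)+\mu(W)=n$; hence $\delta(B,W)=n-|n-2\mu(B)|$. Since changing $c$ flips the type of $c$ and of no other crossing and yields a reduced alternating diagram (all crossings of one type), in $\mathcal D$ exactly one crossing has the minority type, i.e.\ $\mu(B)\in\{1,n-1\}$, so $\delta(B,W)=2$ and (i) holds. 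For (ii) I would pass to the standard local model of the checkerboard surfaces at $c$: the two bands there meet along one arc $\gamma$ of $B\cap W$ through $c$ with endpoints on the two strands of $K$, and because the crossing is ``the wrong way'' one can draw explicitly an almost compressing disk $D_{B}$ of $B$ and $D_{W}$ of $W$, each supported near $c$, with $\gamma_{D_{B}}\cup\gamma_{D_{W}}\subset\gamma\subset B\cap W$ and with $D_{B}\cap D_{W}$ a single clasp equal to $\gamma_{D_{B}}\cap\gamma_{D_{W}}$; this is the picture suggested by Figure~\ref{fig:acd}, and (ii-a)--(ii-c) are to be read off from it.

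For the ``if'' direction I would argue as follows. By the first preliminary, condition~(i) gives $\delta(B,W)\in\{0,2\}$. If $\delta(B,W)=0$ then $K$ is alternating by Theorem~\ref{theorem:altGH}, hence almost alternating, and (ii) is not needed. If $\delta(B,W)=2$ I would use (ii): conditions (ii-a)--(ii-c) should force $D_{B}\cup D_{W}$, together with the arcs of $K$ and of $B\cap W$ meeting them, to fill a ball $\mathcal N$ inside which $(S^{3},K,B,W)$ is the standard checkerboard picture of a dealternator, with everything outside $\mathcal N$ untouched. Performing the simultaneous almost compression of $B$ along $D_{B}$ and of $W$ along $D_{W}$ is then ``change the crossing inside $\mathcal N$'': it produces a knot $\widehat K$ obtained from $K$ by a single crossing change, with spanning surfaces $\widehat B,\widehat W$ for which $b_{1}$ is unchanged and $|\sigma(\widehat W)-\sigma(\widehat B)|$ changes by $\pm2$; a case check on the clasp shows $\delta$ drops, so $\delta(\widehat B,\widehat W)=0$. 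Then Theorem~\ref{theorem:altGH} makes $\widehat K$ alternating, and --- using the constructive content of Howie's and Greene's proofs --- $\widehat B,\widehat W$ are the checkerboard surfaces of a reduced alternating diagram $\widehat{\mathcal D}$ of $\widehat K$, inside which the almost compression corresponds to changing one crossing $\widehat c$. Changing $\widehat c$ back turns $\widehat{\mathcal D}$ into a diagram of $K$ that is alternating except at $\widehat c$, i.e.\ an almost alternating diagram; hence $K$ is almost alternating.

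The hard part will be the ``if'' direction, in two intertwined places. I expect the main obstacle to be showing that (ii-a)--(ii-c) really do pin $(S^{3},K,B,W)$ near the clasp down to the standard dealternator model, so that the simultaneous almost compression is well defined, supported in a ball, and realizes an honest single crossing change --- and that it lowers $\delta$ by exactly $2$ rather than leaving it unchanged, which is where the precise chirality of the clasp in Figure~\ref{fig:acd}(a) must be used. A secondary difficulty is the reconstruction step: upgrading ``$\widehat K$ is alternating'' to ``$\widehat B,\widehat W$ are checkerboard surfaces of an alternating diagram'' and locating the crossing $\widehat c$ to reverse relies on the inner workings of Howie's and Greene's arguments, not just the statement of Theorem~\ref{theorem:altGH}. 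On the ``only if'' side the only real work is drawing $D_{B},D_{W}$ near the dealternator so that (ii-a)--(ii-c) hold exactly; the numerics are the short computation above.
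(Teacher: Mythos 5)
Your ``only if'' direction is essentially sound and close in content to the paper's (the paper realizes the disks via a handle attached near the dealternator, turning the diagram into a cellular alternating diagram on a standard torus, and gets condition (i) from the inequality $d(K)\leq g_{alt}(K)$; your direct count $\delta(B,W)=n-|n-2\mu(B)|=2$ is an equivalent computation). The ``if'' direction, however, has a genuine gap, and it sits exactly where you flagged it. Your plan is: change the crossing at the clasp to get $\widehat K$ with $\delta(\widehat B,\widehat W)=0$, invoke Theorem~\ref{theorem:altGH} to make $\widehat K$ alternating, then ``change the crossing back'' in an alternating diagram $\widehat{\mathcal D}$. But Theorem~\ref{theorem:altGH} only tells you $\widehat K$ \emph{is} alternating; it does not give you an alternating diagram whose checkerboard surfaces are your $\widehat B,\widehat W$, and more fatally it gives you no control over \emph{where} your crossing-change site lands in $\widehat{\mathcal D}$. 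Howie's and Greene's proofs both perform surgeries and isotopies on the surfaces before extracting a diagram, and there is no reason the ball $\mathcal N$ supporting your crossing change survives as (a neighborhood of) a single crossing of the final diagram --- it could be smeared across the diagram or absorbed by a compression. Without that, ``changing $\widehat c$ back'' is not defined, and the conclusion that $K$ has an \emph{almost alternating diagram} does not follow. A second, smaller gap is the claim that conditions (ii-a)--(ii-c) pin the picture near the clasp down to the standard dealternator model: as stated, $B$ and $W$ can meet $D_W$ and $D_B$ in many extraneous circles and arcs, and $B\cap W$ can contain inessential circles; these must be removed by innermost/outermost arguments (using incompressibility of $B$ and $W$, which itself must be derived from the assumption $\delta\neq 0$) before any local model is available.

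For contrast: the paper never performs the crossing change. It runs Howie's geometric argument directly in the toroidal setting --- after cleaning up the intersections ($W\cap D_B=\gamma_{D_B}$, $B\cap D_W=\gamma_{D_W}$, no inessential circles in $B\cap W$), it glues $B\cup W$ along the intersection arcs (standard by Howie's Lemma~1) to get a closed surface $F$ with $\chi(F)=0$ (your parity observation $\sigma(S)\equiv b_1(S)\pmod 2$ is used here, together with $\chi(B\cap W)=-|\sigma(B)-\sigma(W)|$). The almost compressing disks then do the real work: their images $\delta_B,\delta_W$ are disjoint from the double-point circles and meet once, which rules out $F$ being non-embedded or a Klein bottle, shows the embedded torus $F$ is standard (both curves bound disks on opposite sides), and shows the induced alternating diagram on $F$ is cellular. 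Cutting $F$ along $\delta_B$ and $\delta_W$ then \emph{constructs} the almost alternating planar diagram, with the dealternator at the vertex $v^*$ coming from $\gamma_{D_B}\cup\gamma_{D_W}$. This sidesteps entirely the reconstruction problem your route runs into; if you want to salvage your approach you would have to reprove, not merely cite, a relative version of Theorem~\ref{theorem:altGH} that outputs a diagram together with a marked crossing at a prescribed location.
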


One can understand the condition (ii) as follows. See Figure \ref{fig:claspacdisk} (a) for an illustration of almost compressing disks satisfying the condition (ii-a) and (ii-b). Near the clasp intersection $D_{B} \cap D_W = \gamma_{D_B} \cap \gamma_{D_{W}}$, two spanning surfaces $B$ and $W$ appear so that the condition (ii-c) is satisfied. Figure \ref{fig:claspacdisk} (b) gives a local model for surfaces $B,W,D_B$ and $D_W$ that satisfy the condition (ii).

The spanning surfaces $B$ and $W$ are mutually intersecting twisted bands. Their almost compressing disks $D_{B}$ and $D_{W}$ appear as half planes, slightly perturbed so that they form a clasp intersection.

To understand Figure \ref{fig:claspacdisk} (b), we take a local coordinate and consider a sequence of slices by the horizontal planes, as shown in Figure \ref{fig:claspacdisk} (c). As the height $t$ increases, two points which are the the slice of the knot $K$ turn. The slice of spanning surfaces $B$ and $W$ turn accordingly. At the critical moment $t=0$, the intersection of $B$ and $W$ appears as a vertical line segment. In the sequence of slice, $D_{B}$ appears as a family of vertical half lines. At $t=0$, $B$ overlaps with $D_B$ and the intersection arc $\gamma_{D_B}$ appears. Also, $D_{W}$ appears as a half-plane in the critical level $t=0$.
By chasing the movie of slices, we see that at $t=0$, $D_{B}$ and $D_{W}$ forms a clasp intersection which is equal to $\gamma_{D_B}\cap \gamma_{D_W}$. Moreover, in the slice $t=0$, the union of intersection arcs $\gamma_{D_{B}} \cup \gamma_{D_W}$ coincides with the vertical line segment $B \cap W$.

\begin{figure}[htbp]
\begin{center}
\includegraphics*[bb= 80 439 429 734, width=120mm]{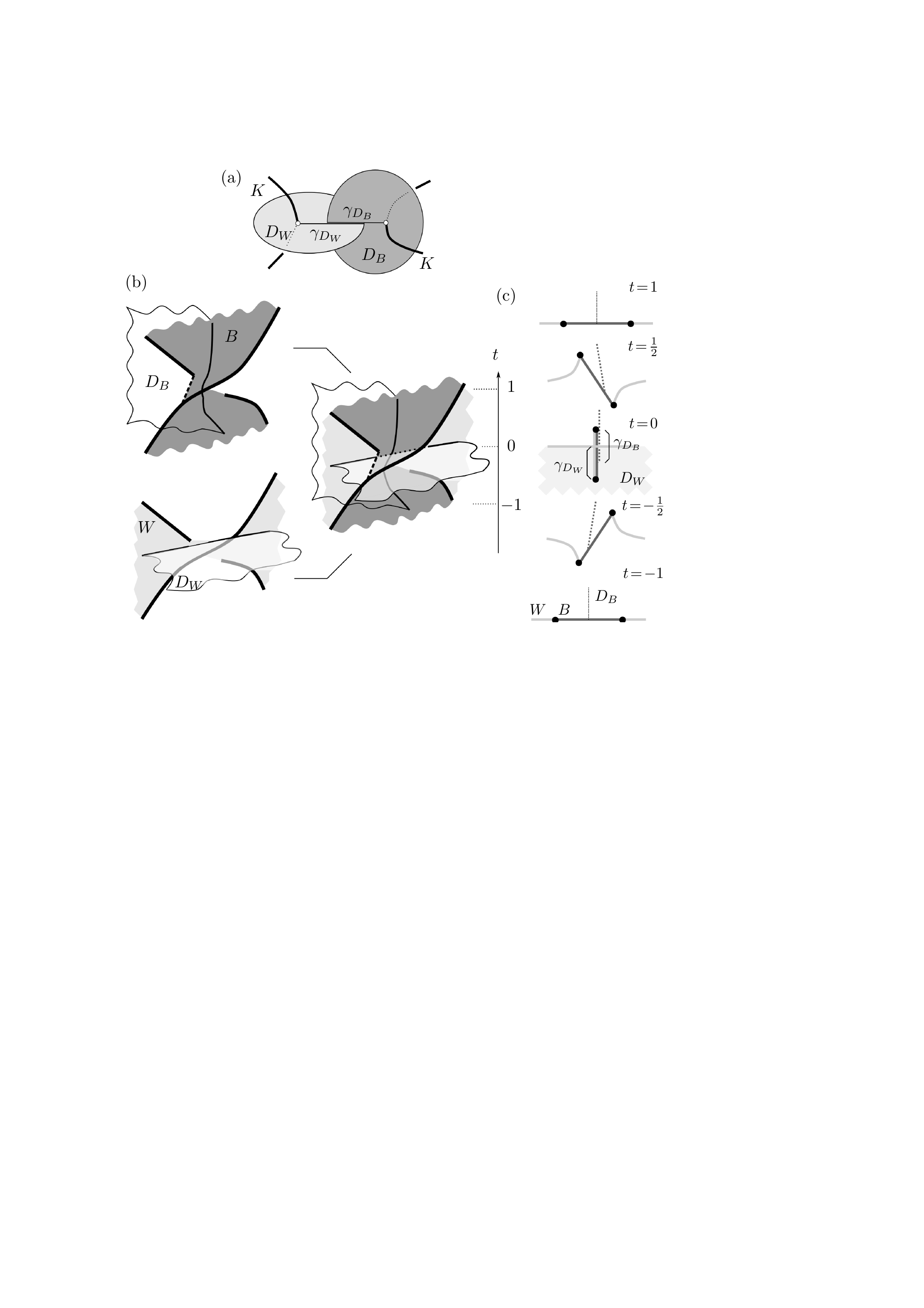}
\caption{(a) Almost compressing disk in condition (ii) of Theorem \ref{theorem:mainaa}. (b) Visualization of $B,W,D_{B}$ and $D_{W}$ in condition (ii) of Theorem \ref{theorem:mainaa}. (c) Movie of slices}
\label{fig:claspacdisk}
\end{center}
\end{figure}

These almost compressing disks and spanning surfaces appear from an alternating diagram on the torus coming from an almost alternating diagram. For an almost alternating diagram $D$ we add a one-handle near the almost alternating crossing $c$. By pushing the almost alternating crossing $c$ to the one-handle, we reverse the over-under information at $c$ to get an alternating diagram on the standardly embedded torus $T$ (see Figure \ref{fig:aadia2} (a,b)).

Let $B$ and $W$ be the checkerboard surface from the resulting alternating diagram on the torus $T$. Take a meridian a (the boundary of a co-core of $H$) and longitude of the torus $T$ so that they intersect exactly once at the crossing point $c$. The disks bounded by these curves give rise to an almost compressing disk $D_{B}$ and $D_{W}$. Near the common point $c$ we slightly push $D_{B}$ and $D_{W}$ so that they form a clasp intersection (see Figure \ref{fig:aadia2} (c)). Then the surfaces $B,W,D_B$ and $D_W$ near $c$ are the same as our local model in Figure \ref{fig:claspacdisk} (b) so they satisfy the condition (ii) in Theorem \ref{theorem:mainaa}.

\begin{figure}[htbp]
\begin{center}
\includegraphics*[bb=80 594 394 738, width=100mm]{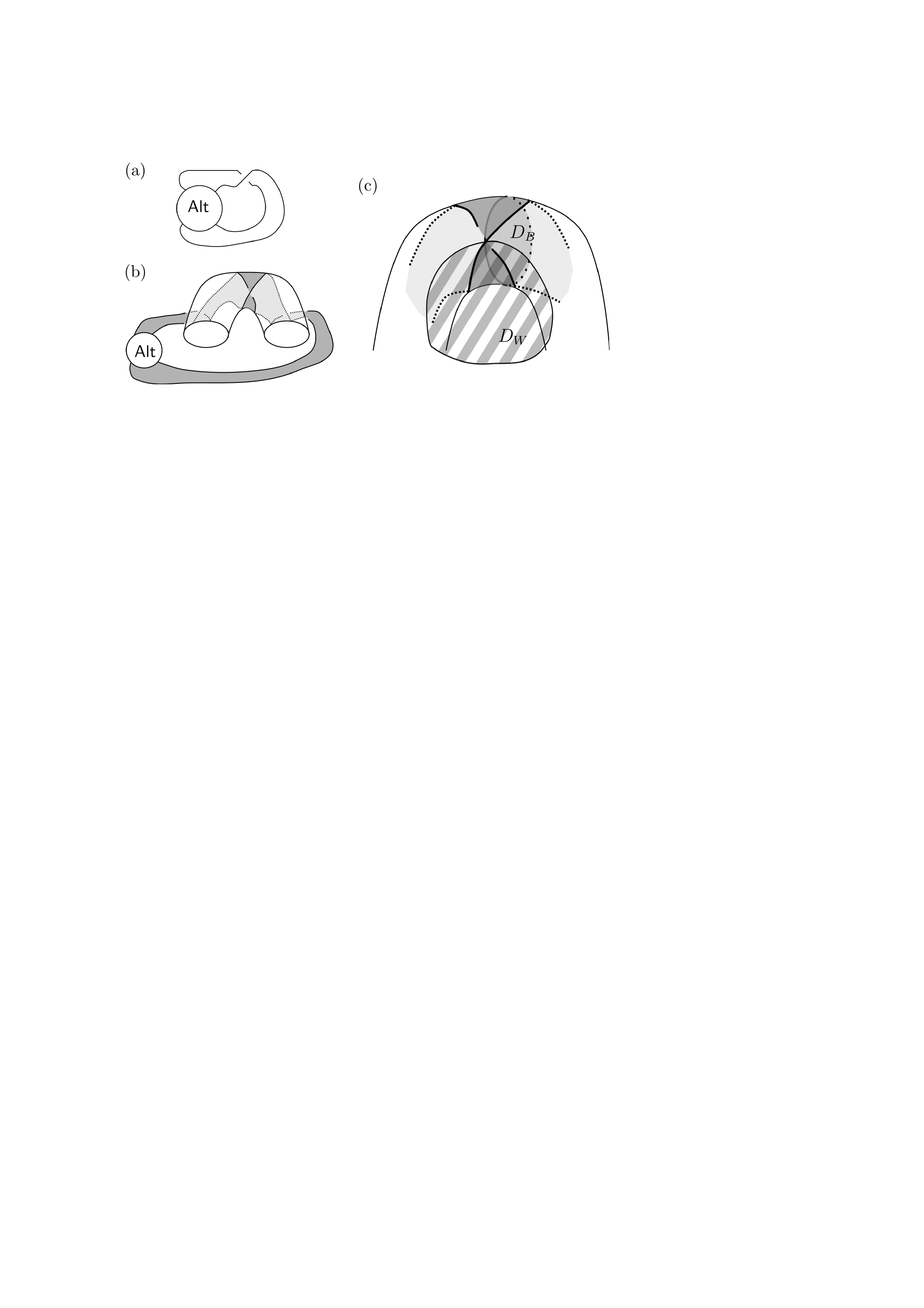}
\caption{(a) Almost alternating diagram (b) Alternating diagram on torus coming from almost alternating diagram and its checkerboard surfaces. (c) Almost compressing disks.}
\label{fig:aadia2}
\end{center}
\end{figure}

As these arguments demonstrate, our proof of Theorem \ref{theorem:mainaa} comes from a point of view that an almost alternating knot is a special case of a \emph{toroidally alternating knot}, a knot admitting a cellular alternating projection on a standardly embedded torus \cite{ad}. Here we say that a knot diagram on a surface $\Sigma$ is \emph{cellular} if it cuts $\Sigma$ into a disjoint union of disks. 

By Theorem \ref{theorem:altGH}, in a setting of Theorem \ref{theorem:mainaa} if either $W$ or $B$ is compressible then $K$ is alternating. The almost compressing disk condition (ii) says that both $B$ and $W$ are `close' to compressible and encodes where is an almost alternating crossing in terms of the almost compressing disks. 

Although in this point of view, it is natural to expect the condition (i) in Theorem \ref{theorem:mainaa} is equivalent to toroidally alternating, it is not the case. (See Remark \ref{remark:tor}). Nevertheless, the quantity $b_{1}(W)+b_{1}(B)-|\sigma(W)-\sigma(B)|$ is interesting in its own right. For a knot $K$ we define the \emph{spanning surface defect} $d(K)$ by 
\[ d(K) = \frac{1}{2}\min \{ b_{1}(W)+b_{1}(B)-|\sigma(W)-\sigma(B)| \mid B,W \text{ are spanning surfaces of } K\}. \]
Note that Theorem \ref{theorem:altGH} says that $d(K)=0$ if and only if $K$ is alternating. By definition $d(K\# K') \leq d(K) +d(K')$, hence $d(K)$ is an \emph{alternating distance} (see \cite{lo}), a quantity which measures to what extent a knot is far from alternating.

Let $g_{alt}(K)$ be the \emph{alternating genus} of $K$, the minimum genus of a Heegaard surface $\Sigma$ in $S^{3}$ such that $K$ has a cellular knot diagram on $\Sigma$. We will show the following inequality.

\begin{theorem}
\label{theorem:estimate}
$d(K) \leq g_{alt}(K)$.
\end{theorem}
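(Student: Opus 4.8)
The plan is to realize the alternating genus by a Heegaard surface and feed its checkerboard surfaces into the definition of $d(K)$. I would start from a Heegaard surface $\Sigma\subset S^3$ of genus $g=g_{alt}(K)$ carrying a cellular alternating diagram $D$ of $K$ with, say, $c$ crossings, and take $B$ and $W$ to be the two checkerboard surfaces of $D$ (the regions of one colour, joined by a half-twisted band at each crossing whose twist records the over/under information). Since $\Sigma$ is embedded the bands push slightly off $\Sigma$ to the appropriate side, so $B$ and $W$ are embedded spanning surfaces of $K$, and connectedness of $D$ gives connectedness of $B$ and $W$. The estimate then amounts to bounding $b_1(B)+b_1(W)-|\sigma(W)-\sigma(B)|$ by $2g$.

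First I would compute $b_1(B)+b_1(W)$ by a cell count on $\Sigma$: the diagram has $c$ vertices, $2c$ edges, hence $c+2-2g$ faces, which split as $f_B+f_W$ according to the colouring; since $\chi(B)=f_B-c$ and $\chi(W)=f_W-c$, adding gives $b_1(B)+b_1(W)=c+2g$. Next I would pin down $|\sigma(B)-\sigma(W)|$. Using the Gordon--Litherland formula $\sigma(K)=\sigma(F)+\tfrac12 e(F)$ for a spanning surface $F$, one gets $\sigma(B)-\sigma(W)=\tfrac12\bigl(e(W)-e(B)\bigr)$, and the Euler number of a checkerboard surface is a sum of local contributions, one per crossing, depending only on the type of that crossing relative to the colouring. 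Because $D$ is alternating, all of its crossings have the same type relative to a fixed colouring, so one of $e(B),e(W)$ is $0$ and the other is $\pm 2c$, whence $|\sigma(B)-\sigma(W)|=c$. Combining the two computations gives $b_1(B)+b_1(W)-|\sigma(W)-\sigma(B)|=(c+2g)-c=2g$, and therefore $d(K)\le \tfrac12\cdot 2g=g=g_{alt}(K)$.

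The step I expect to be the main obstacle is the reduction to a checkerboard colourable realizing diagram: a cellular alternating diagram on a surface of positive genus need not be colourable, the obstruction being a nonzero class in $H^1(\Sigma;\Z/2)$. I would handle this by arguing that a diagram realizing $g_{alt}(K)$ can always be chosen colourable---for instance, a simple closed curve dual to the obstruction meets $D$ transversely in an odd number of points, and compressing $\Sigma$ along it, after rerouting the finitely many strands it meets so as to keep the diagram cellular and alternating, should produce a cellular alternating diagram on a surface of smaller genus, contradicting minimality. Carrying out this rerouting carefully, together with getting the Euler-number signs right in the second computation above, are the only places where the fine structure of alternating diagrams on surfaces---as opposed to a bare cell count---actually enters the argument.
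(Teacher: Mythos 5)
Your main computation coincides with the paper's: the cell count giving $b_1(B)+b_1(W)=c+2g$ is Lemma~\ref{lemma:generalGL}(ii), and the identification of $|\sigma(B)-\sigma(W)|$ with $c$ via Gordon--Litherland is Lemma~\ref{lemma:generalGL}(iii) together with the observation that an alternating coloring makes every crossing the same type. One caveat there: on a surface of positive genus the individual Euler numbers $e(B)$, $e(W)$ are \emph{not} sums of purely local crossing contributions --- each differs from twice the blackboard framing $f_\Sigma$ of $K$ in $\Sigma$ by such a sum, and $f_\Sigma$ need not vanish. So your claim that one of $e(B),e(W)$ is $0$ and the other is $\pm 2c$ is unjustified; only the difference $\tfrac12 e(B)-\tfrac12 e(W)=b(D)-a(D)$ is local. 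Since the difference is all you use, the conclusion $|\sigma(B)-\sigma(W)|=c$ survives, but the intermediate statement should be dropped.

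The genuine gap is in your treatment of checkerboard colorability. Your proposed fix --- take a simple closed curve dual to the $H^1(\Sigma;\Z/2)$ obstruction, compress $\Sigma$ along it, and reroute the strands it meets --- does not work as stated. First, an essential simple closed curve on a genus-$g$ Heegaard surface of $S^3$ need not bound a compressing disk in either handlebody, so the compression may simply be unavailable. Second, ``rerouting the finitely many strands so as to keep the diagram cellular and alternating'' is exactly the hard part: sending a strand around the surface creates new crossings whose over/under pattern you do not control, and there is no reason the result stays alternating. The correct resolution is that no reduction is needed at all: a cellular \emph{alternating} diagram on any closed surface automatically admits a checkerboard coloring. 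The paper proves this (Lemma~\ref{lemma:cc}) by cutting $\Sigma$ into a $2g$-gon $X$ disjoint from the crossings, labelling each point of $\partial X\cap D$ with ``over'' or ``under'', and observing that the alternation of these labels along $\partial X$ is incompatible with any $\gamma_i$ meeting $D$ an odd number of times (cellularity forces each $\gamma_i$ to meet $D$). With that lemma in hand your argument goes through; note also that the Heegaard hypothesis is never used, so the bound in fact holds for cellular alternating diagrams on arbitrary embedded closed surfaces.
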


It is interesting that our proof of Theorem \ref{theorem:estimate} is inspired from Greene's argument based on Gordon-Litherland pairings, whereas the proof of Theorem \ref{theorem:mainaa} is more related to Howie's geometric argument.

\section*{Acknowledgements}
The author was partially supported by JSPS KAKENHI, Grant Number 15K17540. 
He gratefully thanks to Joshua Howie and Joshua Greene for pointing out an error in the first version. He sincerely wish to thank Joshua Howie for sharing his deep insight. He also thanks to Tetsuya Abe for comments on earlier draft of the paper.

\section{Proofs}
\label{section:ssd}

Let $K$ be a knot which is represented by a diagram $D$ on an oriented embedded closed surface $\Sigma \subset S^{3}$. Assume that the diagram $D$ admits a checkerboard coloring. That is, for each complementary region of the diagram $\Sigma \setminus D$ one can associate the black or white colors so that no two adjacent regions have the same color. 
By attaching twisted bands at the corners of black-colored (resp. white-colored) regions, we get a spanning surface $B$ (resp. $W$) of $K$ which we call the \emph{checkerboard surfaces}.

We say that a crossing $c$ of a diagram $D$ is of \emph{type a} (resp. \emph{type b}) if near the crossing $c$, the coloring is as shown in Figure \ref{fig:types} (1). Also, we orient the diagram $D$ and we say that $c$ is of \emph{type I} (resp. \emph{type II}) if near the crossing $c$, coloring and orientations are as shown in Figure \ref{fig:types} (2).

\begin{figure}[htb]
\begin{center}
\includegraphics*[bb= 74 672 303 740,width=70mm]{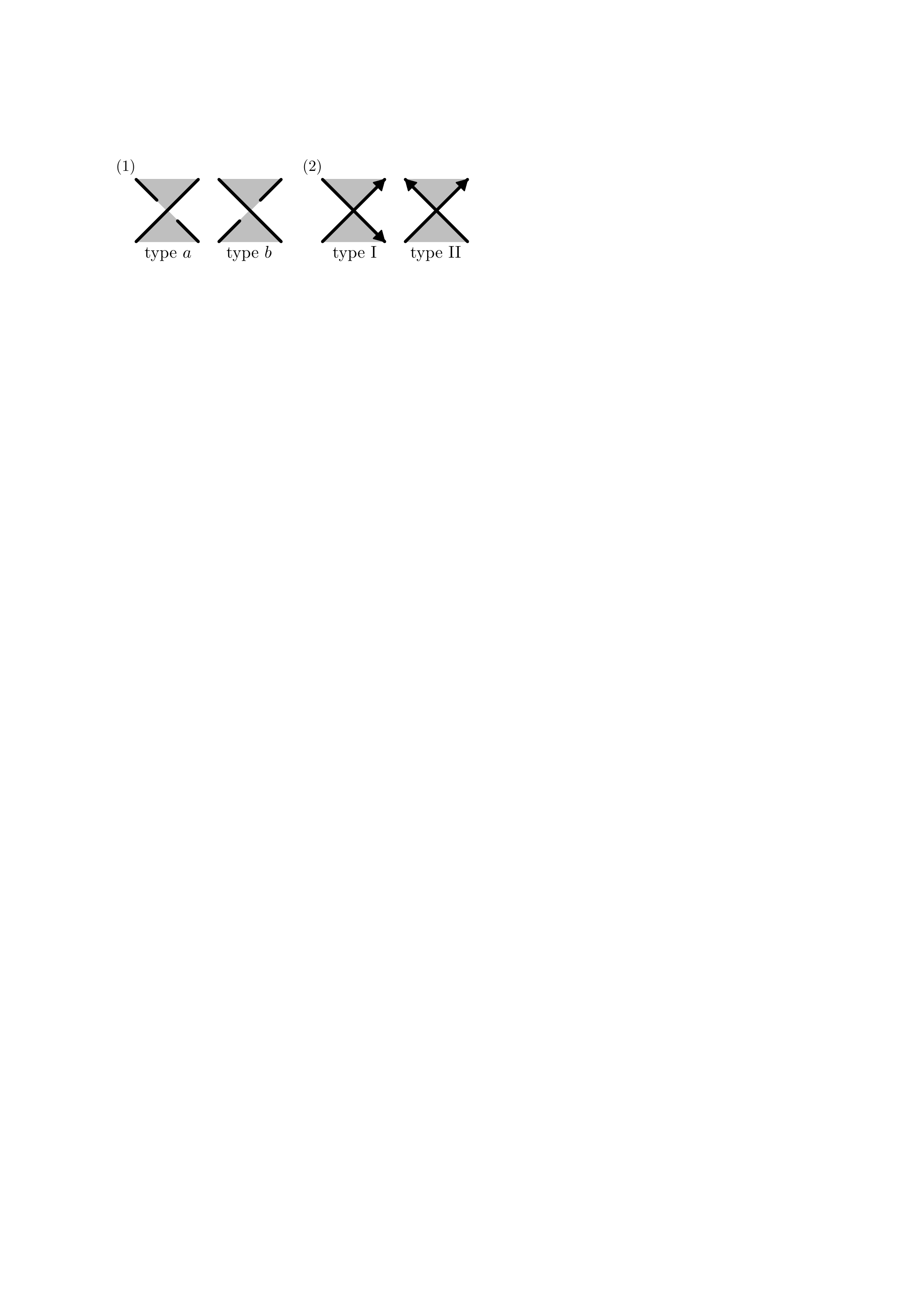}
\caption{Types of crossings. In the definition of type I and type II crossing, over-under information is not used.}
\label{fig:types}
\end{center}
\end{figure}

Let $c(D)$ be the number of the crossings of the diagram $D$.
We denote the number of the crossings of type $a$ and $b$, by $a(D)$ and $b(D)$. respectively.

\begin{lemma}
\label{lemma:generalGL}
Let $B$ and $W$ be the checkerboard surface of a knot $K$ coming from a cellular knot diagram $D$ on a surface $\Sigma$. 
\begin{enumerate}
\item[(i)] At least one of $B$ or $W$ is non-orientable.
\item[(ii)] $b_{1}(W)+b_{1}(B) =  c(D)+b_{1}(\Sigma)$.
\item[(iii)] $b(D)- a(D)= \frac{1}{2}e(B)-\frac{1}{2}e(W) = \sigma(W)-\sigma(B)$.
\end{enumerate}
\end{lemma}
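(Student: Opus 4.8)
The plan is to treat the three parts in turn; (i) and (ii) reduce to combinatorics of the two checkerboard graphs, and (iii) combines the Gordon--Litherland signature formula $\sigma(K)=\sigma(S)+\tfrac12 e(S)$ recalled in the introduction with the crossing-by-crossing computation of the Euler number of a checkerboard surface from \cite{gl}.

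Since $D$ is cellular, each face of $\Sigma\setminus D$ is a disk, so $B$ deformation retracts onto the graph $G_B$ with one vertex per black face and one edge per crossing, the edge at a crossing joining the two black faces that meet at its opposite corners; likewise $W$ retracts onto $G_W$. As each band is half-twisted, the two disks it joins must receive incompatible orientations, so a coherent orientation of $B$ exists precisely when the vertices of $G_B$ admit a proper $2$-coloring; hence $B$ is orientable if and only if $G_B$ is bipartite, and similarly for $W$ and $G_W$. To prove (i) I would suppose both $G_B$ and $G_W$ are bipartite and fix a function $\varepsilon$ from the set of all faces to $\{\pm 1\}$ that restricts to a proper $2$-coloring of $G_B$ on the black faces and of $G_W$ on the white faces. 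Each edge $E$ of $D$ bounds exactly one black and one white face; set $s(E)=\varepsilon(\text{black face at }E)\,\varepsilon(\text{white face at }E)$. Running straight through a crossing replaces the adjacent black face by the other black face there and the adjacent white face by the other white face there, each replacement being a sign reversal, so $s$ is unchanged along a strand; since $K$ is connected, $s$ takes a single value on every edge of $D$. But reading the four faces cyclically around a crossing, they are the two black faces and the two white faces each once, so the four edges incident to that crossing carry $s$-values in an alternating pattern $x,-x,x,-x$ with $x=\pm1$ --- impossible once $s$ is constant and $D$ has a crossing. This gives (i).

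For (ii): $B$ and $W$ are connected, hence so are $G_B$ and $G_W$, so $b_1(B)=c(D)-f_B+1$ and $b_1(W)=c(D)-f_W+1$, where $f_B$ and $f_W$ count the black and white faces. Adding, and writing $F$ for the total number of faces of $D$, gives $b_1(B)+b_1(W)=2c(D)-F+2$. Regarding $D$ as a $4$-valent graph on $\Sigma$, with $c(D)$ vertices and $2c(D)$ edges, yields $\chi(\Sigma)=c(D)-2c(D)+F$, so $F=c(D)+\chi(\Sigma)=c(D)+2-b_1(\Sigma)$, and substituting gives $b_1(B)+b_1(W)=c(D)+b_1(\Sigma)$.

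For (iii), the second equality is immediate: apply $\sigma(K)=\sigma(S)+\tfrac12 e(S)$ to $S=B$ and to $S=W$ and subtract. For the first equality I would follow the local evaluation of the Euler number of a checkerboard surface in \cite{gl}: after orienting $K$, $\tfrac12 e(B)$ is a sum over the crossings of type II with respect to $B$, each contributing $\pm1$ with sign governed by whether the crossing is of type $a$ or of type $b$ (type I crossings contributing $0$), the total being independent of the chosen orientation; the same description holds for $W$. By Figure \ref{fig:types}, interchanging the black and white colorings interchanges type I with type II and type $a$ with type $b$; hence every crossing is of type II with respect to exactly one of $B$, $W$, and tracing signs through both cases shows that each crossing contributes $+1$ to $\tfrac12 e(B)-\tfrac12 e(W)$ if it is of type $b$ and $-1$ if it is of type $a$. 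Summing over all crossings gives $\tfrac12 e(B)-\tfrac12 e(W)=b(D)-a(D)$. The step I expect to need the most care is exactly this last one: the decompositions of $e(B)$ and of $e(W)$ into per-crossing terms each depend on the auxiliary orientation of $K$, and what must be verified --- by pinning down the sign and type conventions of \cite{gl} precisely --- is that this dependence cancels in the difference, leaving the orientation-free quantity $b(D)-a(D)$.
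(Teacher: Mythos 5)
Your part (ii) is essentially the paper's argument verbatim: the paper phrases the Euler characteristic count via the cell decomposition of $\Sigma$ whose $0$-cells are the black regions, $1$-cells the crossings, and $2$-cells the white regions, but the computation is the same as your $4$-valent-graph count. Your part (i), on the other hand, is correct but genuinely different. The paper simply observes that every crossing is of type I or of type II, that a type I crossing forces $W$ to be non-orientable and a type II crossing forces $B$ to be non-orientable, so the presence of at least one crossing makes one of the two surfaces non-orientable. Your bipartiteness criterion plus the sign-propagation argument along the knot reaches the same conclusion without the type I/II dichotomy; it is a clean self-contained alternative (and, like the paper's argument, it implicitly needs $c(D)\ge 1$).

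For part (iii) the second equality is handled exactly as in the paper, by subtracting the Gordon--Litherland signature formula for $B$ and for $W$. But your derivation of the first equality rests on a false intermediate statement: for a diagram on a closed surface $\Sigma\subset S^{3}$ of positive genus, $\tfrac{1}{2}e(B)$ is \emph{not} a sum of local $\pm 1$ contributions over crossings. The crossing-by-crossing evaluation in \cite{gl} is for planar diagrams; on a general $\Sigma$ the local count at the crossings only computes the difference between the framing induced by the checkerboard surface and the blackboard framing $f_{\Sigma}$ determined by $\Sigma$, and $f_{\Sigma}$ is a global quantity that is nonzero in general (for a standardly embedded torus it records the self-pairing of $[K]\in H_{1}(\Sigma)$). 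The paper is careful about exactly this point: it proves $\tfrac{1}{2}e(B)-f_{\Sigma}=b_{I}(D)-a_{I}(D)$ and $\tfrac{1}{2}e(W)-f_{\Sigma}=a_{II}(D)-b_{II}(D)$, and only then subtracts, so that $f_{\Sigma}$ cancels. Your final formula survives for the same reason --- the omitted term is common to $B$ and $W$ --- but as written the step fails, and the genuine subtlety is the blackboard framing of $\Sigma$, not the orientation-dependence you flagged (which is harmless for a knot: reversing the orientation reverses both strands at every crossing and so preserves the type I/II classification).
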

\begin{proof}
(i): Fix a checkerboard coloring of $D$. Then the notion of type I and type II crossings does not depend on a choice of an orientation of $K$. If the diagram has a crossing of type I (resp. type II), then $W$ (resp. $B$) is non-orientable.\\

(ii): A cellular diagram $D$ induces a cellular decomposition of $\Sigma$ whose $0$-, $1$- and $2$-cells correspond to the black colored regions, the crossings, and the white colored regions, respectively. By definition, $b_{1}(W) =c(D)- \#\{\text{White colored regions}\} +1$ and $b_{1}(B) = c(D) - \#\{\text{Black colored regions}\} +1$. Therefore
\begin{eqnarray*}
b_{1}(W)+b_{1}(B)&=&c(D) + 2 - \left( \#\{\text{Black colored regions}\} + c(D) - \#\{\text{White colored regions}\} \right)  \\
& = & c(D) + (2-\chi(\Sigma)) = c(D)+ b_{1}(\Sigma).\\
\end{eqnarray*}

(iii): Let $f_{\Sigma}$ be the blackboard framing of $K$, the framing determined by $\Sigma$. By comparing the framings determined by the checkerboard surfaces and the blackboard framing (see Figure \ref{fig:framing}), we get 
\[   \frac{1}{2}e(B) - f_{\Sigma}= b_{I}(D) - a_{I}(D), \quad \frac{1}{2}e(W)-f_{\Sigma} = a_{II}(D)-b_{II}(D). \]
Here $a_{I}(D)$ denotes the number of crossings which is both of type a and of type I. The meanings of $a_{II}(D),b_{I}(D)$ and $b_{II}(D)$ are similar.
Therefore
\[ \frac{1}{2} e(B)- \frac{1}{2}e(W) = b(D)-a(D).\]

\begin{figure}[htb]
\begin{center}
\includegraphics*[bb=80 624 337 734,width=70mm]{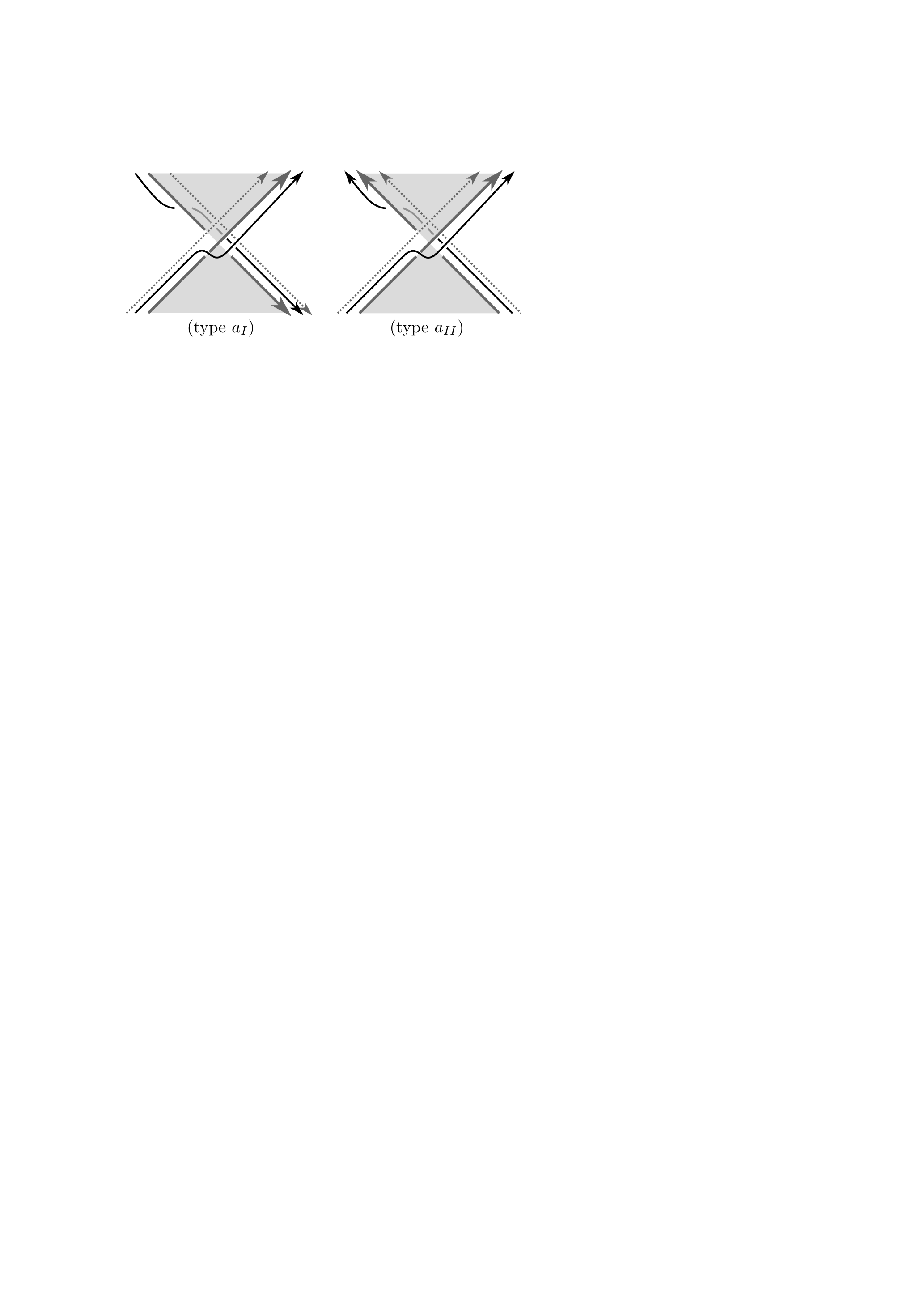}
\caption{Difference of the blackboard framing (dotted arrow) and the framing from $B$ (black arrow), near the crossing of type $a_I$ and $a_{II}$. Crossing of type $b_I$ and $b_{II}$ are similar.}
\label{fig:framing}
\end{center}
\end{figure}
\end{proof}

Lemma \ref{lemma:generalGL} gives an estimate of the \emph{crosscap number} $C(K)$ of a knot $K$, the minimum 1st betti number of a \emph{non-orientable} spanning surface of $K$.
\begin{corollary}
If a knot $K$ is represented by a cellular diagram $D$ on a surface $\Sigma$ which admits a checkerboard coloring, then
$ C(K) \leq \left\lceil \frac{c(D)}{2} \right\rceil + g(\Sigma)$. Here $\left\lceil x \right\rceil$ denotes the ceiling of $x$, the minimum integer which is greater than or equal to $x$. 
\end{corollary}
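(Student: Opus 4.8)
The corollary follows almost immediately from Lemma \ref{lemma:generalGL}, so the proof is short. The plan is to combine parts (i) and (ii) of the lemma with a parity argument. By part (ii), $b_{1}(W)+b_{1}(B) = c(D)+b_{1}(\Sigma) = c(D)+2g(\Sigma)$, since $\Sigma$ is a closed orientable surface of genus $g(\Sigma)$. Hence at least one of $b_{1}(B)$, $b_{1}(W)$ is at most $\frac{1}{2}\left(c(D)+2g(\Sigma)\right) = \frac{c(D)}{2}+g(\Sigma)$, and since Betti numbers are integers, that surface has first Betti number at most $\left\lceil \frac{c(D)}{2}\right\rceil + g(\Sigma)$.

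The one subtlety is that the crosscap number is defined using \emph{non-orientable} spanning surfaces, so I need the surface realizing the smaller Betti number to be non-orientable. This is where part (i) enters: it guarantees that at least one of $B$, $W$ is non-orientable. The only case that needs a moment's thought is when the non-orientable one is the \emph{larger} of the two. In that situation I would instead note that if, say, $W$ is orientable with $b_{1}(W) > b_{1}(B)$, then $B$ is non-orientable (if both were, pick either; but if $W$ is orientable then $B$ must be non-orientable by part (i)), and $b_{1}(B) \leq b_{1}(W)$ forces $b_{1}(B) \leq \frac{1}{2}(c(D)+2g(\Sigma))$ as well, so the non-orientable surface $B$ already gives the bound. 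More cleanly: among $\{B,W\}$, at least one is non-orientable; if the non-orientable one has the smaller (or equal) Betti number we are done directly, and if the orientable one has the strictly smaller Betti number then the non-orientable one has Betti number $\geq$ the orientable one's, but since their sum is $c(D)+2g(\Sigma)$ and the orientable one is $< $ half the sum, we still cannot immediately conclude. So I would argue: let $S$ be whichever of $B,W$ is non-orientable (choosing arbitrarily if both are). If $b_{1}(S) \leq \frac{1}{2}(c(D)+2g(\Sigma))$ we win; otherwise the \emph{other} surface $S'$ has $b_{1}(S') < \frac{1}{2}(c(D)+2g(\Sigma))$, but $S'$ might be orientable. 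In that last case I would tube $S'$ with a small twisted band (a trivial one) to make it non-orientable at the cost of raising $b_{1}$ by exactly $1$, giving $b_{1} \leq \left\lceil\frac{c(D)}{2}\right\rceil + g(\Sigma)$ after accounting for the ceiling — but this needs care to see the arithmetic still closes.

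Actually the cleanest route, and the one I would write up, avoids the band-tubing: I claim we may always take the non-orientable surface to be the one with the smaller first Betti number. Indeed if both are non-orientable this is trivial. If exactly one, say $W$, is orientable, then by a standard fact an orientable spanning surface and a checkerboard surface relation — or more simply, just observe that in the checkerboard setup the orientable checkerboard surface is the Seifert-type one and comparing $b_{1}(W) = c(D) - (\#\text{white}) + 1$ with the Euler characteristic bookkeeping shows no obstruction. Rather than rely on that, the honest and complete argument I would present is: set $m = \min(b_{1}(B), b_{1}(W))$, so $m \leq \frac{1}{2}(c(D)+2g(\Sigma))$ hence $m \leq \lceil c(D)/2 \rceil + g(\Sigma)$; if the surface achieving $m$ is non-orientable we are done, and if not, then the \emph{other} surface is non-orientable with first Betti number $c(D)+2g(\Sigma) - m$, and combined with $C(K) \le$ anything already known plus the observation that $m$ and $c(D)+2g(\Sigma)-m$ are the two values, we get $C(K) \le \max$, which is not good enough — so I genuinely need part (i) to be applicable to the \emph{minimizing} surface.

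Given the above, the \textbf{main obstacle} is precisely pinning down that the non-orientable checkerboard surface can be taken to realize the minimum of the two Betti numbers, or else supplying an independent argument. In fact this is handled by re-examining Figure \ref{fig:types}: a crossing of type I makes $W$ non-orientable and a crossing of type II makes $B$ non-orientable, and one checks that the count of type I versus type II crossings, together with Lemma \ref{lemma:generalGL}(iii), is compatible with the orientable one being the larger — but when $c(D)$ is odd, $b_{1}(B)$ and $b_{1}(W)$ have opposite parities (their sum $c(D)+2g(\Sigma)$ is odd), so the minimum is at most $\lfloor (c(D)+2g(\Sigma))/2 \rfloor = \lceil c(D)/2\rceil + g(\Sigma) - \text{(possible slack)}$; a short case analysis on the parity of $c(D)$ and which surface is non-orientable then yields the stated ceiling bound in every case. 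So I would conclude by: (1) invoking (ii) for the sum; (2) invoking (i) to locate a non-orientable surface; (3) a two-line parity case check to place the ceiling; done.
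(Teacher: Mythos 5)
Your first two steps (the sum formula $b_1(B)+b_1(W)=c(D)+2g(\Sigma)$ from Lemma~\ref{lemma:generalGL}(ii), and part (i) to locate a non-orientable surface) are exactly the intended ones, and you correctly isolate the real difficulty: the non-orientable checkerboard surface may be the one with the \emph{larger} first Betti number. But your writeup never resolves that case. You float three escapes: (1) a claim that the non-orientable surface ``may always be taken to be the one with the smaller first Betti number,'' which is unsubstantiated and false in general --- for the standard $9$-crossing diagram of the pretzel knot $P(3,3,3)$ the orientable checkerboard surface has $b_1=2$ while the non-orientable one has $b_1=7>\lceil 9/2\rceil$; (2) the half-twisted-band trick, which you abandon with ``this needs care to see the arithmetic still closes''; and (3) a closing promise of ``a short case analysis on the parity of $c(D)$,'' which is never carried out and cannot work alone, since parity of $b_1$ does not manufacture non-orientability. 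As written, the case where the minimizing checkerboard surface is orientable is a genuine gap.

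The fix is the band trick you abandoned, and the arithmetic does close in two lines. Set $N=c(D)+2g(\Sigma)$ and let $S'$ be the checkerboard surface with $b_1(S')=\min(b_1(B),b_1(W))$. If $S'$ is non-orientable, then $C(K)\leq b_1(S')\leq \lfloor N/2\rfloor \leq \lceil c(D)/2\rceil+g(\Sigma)$. If $S'$ is orientable, there are two subcases: if $b_1(S')=N/2$, the other surface is non-orientable by Lemma~\ref{lemma:generalGL}(i) and has the same Betti number $N/2=\lceil c(D)/2\rceil+g(\Sigma)$, so use it directly; if $b_1(S')<N/2$, attach a small half-twisted band to $S'$ along two adjacent arcs of $K$ (the standard move behind $C(K)\leq 2g(K)+1$ in \cite{my}), producing a non-orientable spanning surface of $K$ with first Betti number $b_1(S')+1\leq \lceil N/2\rceil=\lceil c(D)/2\rceil+g(\Sigma)$. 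That is the whole case analysis. (Your parity observation is not wasted: an orientable spanning surface of a knot has even $b_1$, and feeding that into the same cases is what yields the sharper floor bound of the paper's remark outside the case $c(D)+2g(\Sigma)\equiv 1\pmod 4$.)
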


\begin{remark}
In a case $g(\Sigma)=0$ (a usual knot diagram), a slightly better bound $C(K) \leq \left\lfloor \frac{c(D)}{2} \right\rfloor$ is known \cite{my}. Here $ \left\lfloor x \right\rfloor$ denotes the floor of $x$, the maximum integer which is smaller than or equal to $x$. Indeed, it is easy to see that a slightly better inequality $C(K) \leq \left\lfloor \frac{c(D)}{2} \right\rfloor + g(\Sigma)$ holds unless $c(D)+2g(\Sigma) \equiv 1 \pmod{4}$.
\end{remark}

A cellular diagram may not admit a checkerboard coloring in general. The following is an interesting property of an alternating cellular diagram.

\begin{lemma}
\label{lemma:cc}
If a link diagram $D$ on a surface $\Sigma$ is cellular and alternating, then it admits a checkerboard coloring. 
\end{lemma}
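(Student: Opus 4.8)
The plan is to argue by contradiction using the two-coloring/Euler-characteristic obstruction that underlies any checkerboard argument. Suppose $D$ is a cellular alternating diagram on $\Sigma$ that does \emph{not} admit a checkerboard coloring. A checkerboard coloring is exactly a proper $2$-coloring of the faces of the $4$-valent graph $G = D \subset \Sigma$ (viewing $G$ as the $1$-skeleton of the induced CW structure, with vertices the crossings, edges the arcs, and $2$-cells the complementary regions, which are all disks by cellularity). Such a $2$-coloring of the faces exists if and only if the \emph{dual} graph $G^{*}$ is bipartite, equivalently if and only if every closed walk in $G^{*}$ has even length, equivalently (since $G$ is $4$-valent) if and only if a certain $\mathbb{Z}/2$ cohomology class vanishes. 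So the first step is to reformulate: non-existence of a checkerboard coloring means there is a cycle in $G^{*}$ of odd length, i.e.\ a cycle $\gamma$ in $\Sigma$, transverse to $D$, meeting $D$ in an odd number of points, none of them crossings.

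The second, and main, step is to derive a contradiction from \emph{alternatingness} along such a cycle. Here I would use the standard trick: the alternating condition gives a coherent local ``over/under'' structure at each crossing, and walking along an edge of $D$ the over/under status of the strand flips exactly at each crossing. Concretely, orient a neighborhood of $\Sigma$ and at each crossing record which of the two incident regions lies to the ``over'' side; alternatingness forces this assignment to be consistent in a way that is equivalent to $G^{*}$ being bipartite. More precisely, I would set up a $\mathbb{Z}/2$ labeling of the corners of the regions at each crossing: at an alternating crossing the four corners, read cyclically, alternate between two types, and the two regions meeting along each edge of $D$ get opposite labels because the edge has a well-defined over/under end-behavior. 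Chasing this around any closed transverse cycle $\gamma$ shows that $\gamma$ must cross $D$ an even number of times (away from crossings), contradicting the odd cycle in $G^{*}$ found above. Hence the checkerboard coloring exists. An alternative and perhaps cleaner route: use the fact that an alternating diagram on $\Sigma$ is the medial (Tait) graph construction applied to an embedded graph, and the two Tait graphs are honest graphs (not virtual), which is precisely equivalent to the checkerboard coloring existing; but I would phrase this directly in terms of the over/under parity to keep the argument self-contained.

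I expect the main obstacle to be handling the global topology of $\Sigma$ correctly: on a surface of positive genus, ``being checkerboard colorable'' is genuinely a nontrivial $\mathbb{Z}/2$-homological condition (unlike $S^{2}$, where it is automatic), so the argument cannot merely be local at crossings — one must show the local alternating data actually patches to kill the relevant class in $H^{1}(\Sigma\setminus D;\mathbb{Z}/2)$ or, dually, $H_{1}$ of the region-adjacency complex. The cleanest way to control this is to observe that the obstruction lives in $H^{1}$ of the underlying $4$-valent graph $G$ (not of $\Sigma$), because the obstruction cocycle is supported on edges of $G$ and its value on each edge is forced to be the nontrivial element by the alternating condition at \emph{both} endpoints simultaneously; consistency at a single crossing (the four corners alternate) then shows the cocycle is a coboundary. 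After that the verification is routine: a coboundary on $G$ gives the desired $2$-coloring of vertices of $G^{*}$, i.e.\ of the faces of $D$, hence a checkerboard coloring of $\Sigma\setminus D$.
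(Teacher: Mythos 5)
Your core local idea is the right one and is genuinely different from the paper's proof: you label the corners at each crossing using the over/under data (say, the two corners swept when the over-strand is rotated counterclockwise to the under-strand), note that the four labels alternate cyclically around a crossing, and note that because each edge of an alternating diagram is an over-pass at one end and an under-pass at the other, the corner labels at the two ends of an edge agree on each side of that edge. The paper instead cuts $\Sigma$ along $2g$ curves into a polygon $X$, assigns over/under symbols to the points of $\partial X\cap D$, and derives a contradiction from the gluing pattern together with an odd intersection number; your route is more local and is essentially the standard argument for checkerboard colorability of alternating diagrams on surfaces.

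However, the way you close the global step is wrong. The obstruction to $2$-coloring the faces does \emph{not} live in $H^{1}$ of the graph $G$, and it is \emph{not} killed by ``consistency at a single crossing'': the all-ones cochain on dual edges automatically vanishes on the boundary of every dual $2$-cell (a $4$-cycle around a crossing), and this only shows that the obstruction descends to a well-defined class in $H^{1}(\Sigma;\mathbb{Z}/2)$ --- namely the Poincar\'e dual of $[D]\in H_{1}(\Sigma;\mathbb{Z}/2)$ --- which need not vanish. Relatedly, your intermediate claim that alternatingness forces every transverse closed curve to meet $D$ evenly is false without cellularity (a crossingless essential circle on the torus is vacuously alternating but not colorable), so cellularity must do real work, and your write-up never deploys it at the decisive moment. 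The correct finish, which your middle paragraph almost contains, is direct: edge-consistency of the corner labels means consecutive corners along the boundary of a complementary region carry the same label, so if every region is a \emph{disk} (cellularity) its boundary is a single cycle and the region inherits a well-defined color; the cyclic alternation of the four labels at each crossing then forces adjacent regions to receive different colors, which is the checkerboard coloring. No detour through cycles in the dual graph is needed, and as stated your last paragraph would ``prove'' that every diagram on every surface is checkerboard colorable, which is false.
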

\begin{proof}
Take $2g$ simple closed curves $\gamma_1,\ldots,\gamma_{2g}$ cutting the surface $\Sigma$ into a $2g$-gon $X$ so that they are disjoint from the crossings of $D$ and transverse to $D$. At each $p \in \partial X \cap D$, we assign the symbol $o$ (over) or $u$ (under) according to the over-under information of the arc in $X$ that passes $p$. The alternating assumption of $D$ implies that the symbol $o$ and $u$ alternate along $\partial X$. The cellular assumption of $D$ implies that $\#(\gamma_i,D) \neq 0$ for all $i$.

Assume to the contrary that $D$ admits no checkerboard coloring. If all of  $\#(\gamma_i,D)$ are even, a checkerboard coloring of $X$ extends to a checkerboard coloring of $\Sigma$. Thus we may assume that $\#(\gamma_1,D)$ is odd. Since $\#(\gamma_2,D) \neq 0$, when we glue $X$ along $\partial X$ to recover $\Sigma$, the points with the same symbol is identified (see Figure \ref{fig:cc}). This contradicts the assumption that $D$ is alternating.

\begin{figure}[htb]
\begin{center}
\includegraphics*[width=90mm, bb= 91 648 354 727]{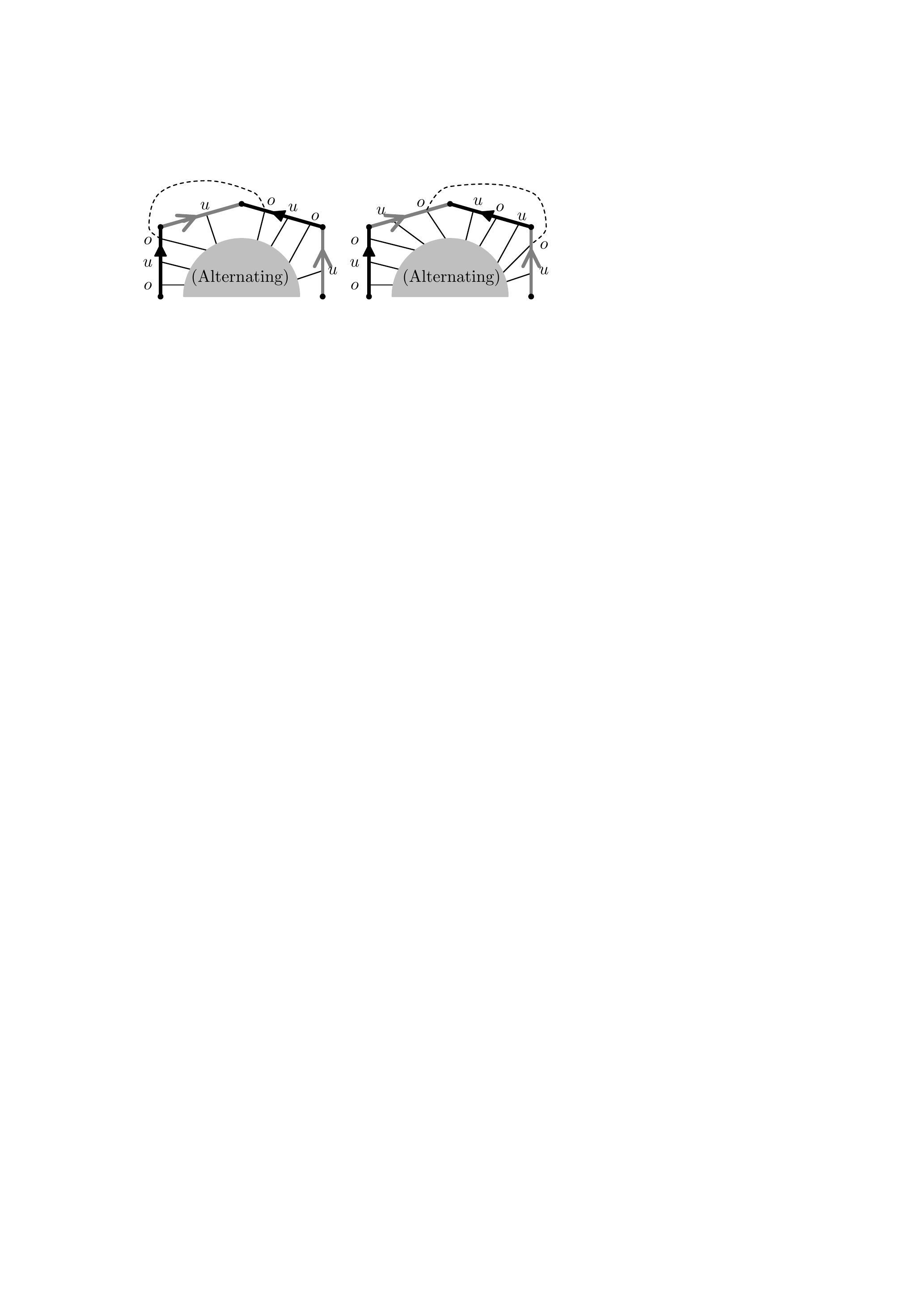}
\caption{Cellular alternating diagram admits a checkerboard coloring. Here a black edge represents $\gamma_1$ and an gray edge represents $\gamma_2$.}
\label{fig:cc}
\end{center}
\end{figure}

\end{proof}

Now we are ready to prove our main theorems.

\begin{proof}[Proof of Theorem \ref{theorem:estimate}]
Assume that $K$ admits a cellular alternating diagram $D$ on an embedded closed surface $\Sigma$ of genus $g$. Let $B$ and $W$ be the checkerboard surface from $D$, whose existence is guaranteed by Lemma \ref{lemma:cc}. We can choose the checkerboard coloring so that every crossing point is of type $b$ because $D$ is alternating. By Lemma \ref{lemma:generalGL} 
\[ 
c(D) = |b(D)- a(D)|= \left|\frac{1}{2}e(B)-\frac{1}{2}e(W)\right| =|\sigma(W)-\sigma(B)| \leq b_{1}(W) +b_{1}(B) = c(D)+b_{1}(\Sigma).  \]
Hence 
\[ b_{1}(W)+b_{1}(B)-|\sigma(W)-\sigma(B)| \leq (c(D)+b_{1}(\Sigma)) -c(D) = 2g. \]
\end{proof}

The proof of Theorem \ref{theorem:estimate} does not use the property that $\Sigma$ is a Heegaard surface. We actually show that $d(K)$ gives a lower bound of the minimum genus of closed embedded surface \emph{which is not necessarily a Heegaard surface} such that $K$ admits a cellular alternating diagram on $\Sigma$.

\begin{proof}[Proof of Theorem \ref{theorem:mainaa}]

First we prove the `only if' part. The property (i) follows from Theorem \ref{theorem:estimate}. As we have already mentioned in Introduction, an almost alternating diagram of $K$ yields a cellular alternating diagram $D$ on the standardly embedded torus, and we have the checkerboard surfaces $B$ and $W$, their almost compressing disks $D_B$ and $D_{W}$ having the property (ii).

We prove the `if' part. By Theorem \ref{theorem:altGH}, if $b_{1}(W)+b_{1}(B)-|\sigma(W)-\sigma(B)| =0$ then $K$ is alternating. Hence throughout the proof, we will assume that

\begin{enumerate}
\item[(D)] $b_{1}(W)+b_{1}(B)-|\sigma(W)-\sigma(B)| \neq 0$.
\end{enumerate}

First we note that this assumption leads to the following.
\begin{claim}
\label{claim:A}
Both $B$ and $W$ are incompressible. 
\end{claim}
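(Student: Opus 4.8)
The plan is a proof by contradiction. If $B$ or $W$ were compressible, I would compress it to obtain a smaller spanning surface of $K$, and then use Theorem~\ref{theorem:altGH} to show that the resulting pair of surfaces forces $K$ to be alternating in a way that clashes with hypothesis (ii).

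First I would pin down the value of $q(B,W):=b_{1}(B)+b_{1}(W)-|\sigma(B)-\sigma(W)|$. A symmetric bilinear form of rank $r$ has signature in $[-r,r]$, so $q(\,\cdot\,,\,\cdot\,)\ge 0$ on every pair of spanning surfaces of $K$; and since $K$ is a knot, the Gordon--Litherland form of each spanning surface $S$ has determinant $\pm\det K\neq 0$, hence is nondegenerate, so $\sigma(S)\equiv b_{1}(S)\pmod 2$ and $q(\,\cdot\,,\,\cdot\,)$ is even. By hypothesis (i) $q(B,W)\le 2$ and by assumption (D) $q(B,W)\neq 0$, so $q(B,W)=2$.

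Now suppose, towards a contradiction, that $B$ is compressible (the case of $W$ being symmetric), with compressing disk $D$, so that $D\cap B=\partial D$ is an essential simple closed curve on $B$. Compress $B$ along $D$, keeping the component that contains $K$ in case $\partial D$ separates; this yields a spanning surface $B'$ of $K$. The compression takes place in a ball disjoint from $K$, so the framing that $K$ inherits from $B'$ equals the one it inherits from $B$; hence $e(B')=e(B)$, and $\sigma(B')=\sigma(B)$ by $\sigma(K)=\sigma(S)+\tfrac{1}{2}e(S)$. Since $\partial D$ is essential, $b_{1}(B')\le b_{1}(B)-1$. Therefore $q(B',W)=q(B,W)-\bigl(b_{1}(B)-b_{1}(B')\bigr)\le 1$, and, being a nonnegative even integer, it must vanish; by Theorem~\ref{theorem:altGH} the pair $(B',W)$ then realizes $K$ as an alternating knot, with $B'$ and $W$ definite of opposite signs.

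The remaining task — and the step where I expect the real work to lie, the rest being bookkeeping with Euler numbers, signatures and Betti numbers — is to contradict this last conclusion using hypothesis (ii). Compressing $B$ leaves the almost compressing disk $D_{W}$ of $W$ and the arc $\gamma_{D_{W}}$ untouched, and after a small isotopy $D$ may be taken disjoint from the disks and arcs of (ii), so $W$ persists as a \emph{definite} spanning surface of $K$ still carrying an almost compressing disk. I would rule this out in one of two ways: (a) show that a definite spanning surface, being isotopic by Greene's part of Theorem~\ref{theorem:altGH} to a checkerboard surface of a reduced alternating diagram, admits no almost compressing disk; or (b) show that $D_{W}$ — after resolving its saddle tangency and its single intersection with $K$ — can be converted into an honest compressing disk of $W$, so that $W$ too is compressible, whereupon compressing both $B$ and $W$ drives $q$ strictly negative, which is absurd. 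Making precise how the auxiliary disks of (ii) interact with the surgeries that remove compressibility is the main obstacle.
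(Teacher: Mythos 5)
Your first three paragraphs reproduce the paper's proof, just with more detail: under (D) and hypothesis (i) the quantity $q(B,W)=b_{1}(B)+b_{1}(W)-|\sigma(B)-\sigma(W)|$ is a nonnegative even integer equal to $2$, and compressing $B$ preserves the boundary slope, hence $\sigma$ via $\sigma(K)=\sigma(S)+\tfrac{1}{2}e(S)$, while strictly decreasing $b_{1}$, so the compressed pair has $q=0$ and Theorem~\ref{theorem:altGH} makes $K$ alternating. The paper stops exactly there. Your fourth paragraph is where the proposal has a genuine gap: neither route (a) nor route (b) is actually carried out, and neither looks salvageable as stated. For (a), nothing in Greene's theorem forbids a definite spanning surface from admitting an almost compressing disk (such a disk meets $K$ once and has a saddle tangency, so it is not a compressing disk and poses no obstruction to definiteness); for (b), there is no canonical way to ``resolve'' the puncture $K\cap D_{W}$ and the tangency at $p_{D_{W}}$ into an honest compressing disk of $W$. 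As written, the proof does not close.

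The gap is, however, self-inflicted: no contradiction with hypothesis (ii) is needed, and the auxiliary disks play no role in this claim. Reaching ``$K$ is alternating'' is not a dead end to be refuted but a terminal success inside the proof of Theorem~\ref{theorem:mainaa}: the paper treats alternating knots as special almost alternating knots, and assumption (D) was introduced precisely after disposing of the case $q=0$, i.e.\ the alternating case. So the claim functions as a dichotomy (equivalently, a ``without loss of generality''): either both surfaces are incompressible, or a compression produces a pair with $q=0$ and the theorem's conclusion already holds. This is also the charitable reading of the paper's own phrase ``this contradicts (D)'', since $q(B',W)=0$ for the new pair does not literally contradict $q(B,W)\neq 0$ for the old one. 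Deleting your fourth paragraph and replacing it with this observation recovers the paper's argument.
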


\begin{proof}[Proof of Claim \ref{claim:A}]
If $B$ or $W$ is compressible then compression gives new spanning surfaces $B'$ and $W'$ with $0=b_{1}(W')+b_{1}(B')-|\sigma(W')-\sigma(B')|$. This contradicts with (D).
\end{proof}

We fix a tubular neighborhood $N(K)$ of $K$, and let $X_{K}=S^{3}\setminus N(K)$. We put $B_{X}= X_{K} \cap B$ and $\lambda_{B}=\partial N(K) \cap B = \partial B_{X}$. Similarly, we put $W_{X}= X_{K} \cap W$ and $\lambda_{W}=\partial N(K) \cap W = \partial W_{X}$. 
In condition (ii) we are assuming that $B$ and $W$ intersect transversely. With no loss of generality, we will always assume the following additional transversality properties.

\begin{enumerate}
\item[(T1)] $\lambda_{B}$ and $\lambda_{W}$ are simple closed curves on $\partial N(K)$ that intersect efficiently.
\item[(T2)] $B$ intersects with the almost compressing disk $D_{W}$ of $W$ transversely. Similarly, $W$ intersects with the almost compressing disk $D_{B}$ of $B$ transversely.
\end{enumerate} 
Here we say that two curves \emph{intersects efficiently} if they are transverse and attain the minimal geometric intersection.

Our next task is to simplify the intersections of $W \cap D_{B}$ and $B \cap D_{W}$.

\begin{claim}
\label{claim:almostccdisk}
We can put $B,W,D_{B}$ and $D_{W}$ so that $W \cap D_{B} = \gamma_{D_{B}}$ and $B \cap D_{W} = \gamma_{D_{W}}$ hold.
\end{claim}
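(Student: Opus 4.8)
The plan is to eliminate the intersection curves of $W \cap D_{B}$ and $B \cap D_{W}$ other than the prescribed intersection arcs $\gamma_{D_B}$ and $\gamma_{D_W}$, using innermost-disk/outermost-arc arguments that exploit the incompressibility of $B$ and $W$ established in Claim \ref{claim:A}. First I would analyze the components of $W \cap D_{B}$. By the defining properties of an almost compressing disk together with the transversality (T2), the intersection $W \cap D_{B}$ consists of the arc $\gamma_{D_B}$ (forced by condition (ii-c)), possibly some simple closed curves in the interior of $D_B$, and possibly some arcs with endpoints on $\partial D_B$. Since $K$ meets the interior of $D_B$ in a single point and that point lies on $\gamma_{D_B}$, every other closed curve or arc of $W \cap D_B$ misses $K$; an arc of $W \cap D_B$ with endpoints on $\partial D_B$ would have both endpoints on $\partial D_B \cap S_B$ (part of the intersection structure of $D_B$ with $B$), which I would rule out or absorb by choosing things carefully near the saddle point $p_{D_B}$.

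The core step is the standard innermost argument. Take a simple closed curve component $c$ of $W \cap D_B$ that is innermost on $D_B$, bounding a subdisk $\Delta \subset D_B$ with $\Delta \cap W = c$ and $\Delta$ disjoint from $K$ and from $\gamma_{D_B}$. The curve $c$ on $W$ is either inessential or essential in $W$. If $c$ bounds a disk in $W$, we can surger $W$ along $\Delta$ (or isotope across $\Delta$) to remove $c$ together with possibly other curves, strictly decreasing $|W \cap D_B|$; this does not change the surface $W$ up to isotopy in $S^3$ and does not disturb the already-achieved features near the clasp. If $c$ is essential in $W$, then $\Delta$ is a genuine compressing disk for $W$, contradicting Claim \ref{claim:A}. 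Iterating, we remove all closed curves of $W \cap D_B$; a symmetric argument handles closed curves of $B \cap D_W$ using incompressibility of $B$. For leftover arc components I would run an outermost-arc version of the same dichotomy: an outermost arc on $D_B$ cuts off a subdisk whose boundary is the arc together with a subarc of $\partial D_B$, and boundary-compressibility considerations (again using incompressibility and the fact that these arcs avoid $K$) let me isotope them away or push them across.

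The main obstacle I anticipate is bookkeeping near the saddle tangency $p_{D_B}$ and the clasp intersection $D_B \cap D_W$: the arcs $\gamma_{D_B}$ and $\gamma_{D_W}$ and the point $p_{D_B}, p_{D_W}$ sit inside the surfaces $B$ and $W$, so when I surger or isotope $W$ to remove excess intersections with $D_B$, I must check that the modifications can be performed in the complement of the portion of $B \cup W \cup D_W$ that realizes conditions (ii-a), (ii-b), (ii-c), so that those conditions survive. Concretely, I would first isotope all spurious intersection curves to lie in a region of $D_B$ away from $\gamma_{D_B}$ and away from the clasp locus, then perform the surgeries there; because incompressibility is a property of the isotopy class of $W$, the resulting $W$ is still a spanning surface of $K$, still meets $D_B$ only in $\gamma_{D_B}$, and the local picture of Figure \ref{fig:claspacdisk}(b) near the clasp is untouched. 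After symmetrically cleaning up $B \cap D_W$, we obtain $W \cap D_B = \gamma_{D_B}$ and $B \cap D_W = \gamma_{D_W}$, as claimed.
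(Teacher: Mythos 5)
Your overall strategy is the same as the paper's: classify the components of $W\cap D_{B}$ (the arc $\gamma_{D_B}$, closed curves, and arcs ending on $\partial D_B$), remove closed curves by an innermost-disk argument using the incompressibility of $W$ from Claim \ref{claim:A}, and remove the remaining arcs by an outermost-arc push. The dichotomy you set up (essential curve gives a compressing disk, contradiction; inessential curve gets surgered away) is exactly how the paper uses Claim \ref{claim:A}.

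However, there is one genuine gap, precisely at the point you flag as "the main obstacle": you never actually verify that the surgery preserves condition (ii), and your proposed fix does not address the real issue. When an innermost circle $\delta\subset W\cap D_B$ bounds $\Delta_\delta\subset D_B$ and (by incompressibility) a disk $\Delta'_\delta\subset W$, the surgery pushes $\Delta'_\delta$ across the $3$-ball $M$ bounded by the sphere $\Delta_\delta\cup\Delta'_\delta$. The danger is not the location of $\delta$ on $D_B$ relative to $\gamma_{D_B}$ or the clasp locus --- isotoping the spurious curves "away from the clasp" on $D_B$ does nothing to control this --- but whether the ball $M$ contains the other almost compressing disk $D_W$, since $\partial D_W$ lies on $W$ and could a priori sit inside the subdisk $\Delta'_\delta\subset W$. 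If $D_W\subset M$, the push destroys condition (ii). The paper closes this with a short but essential argument: if $M\cap D_W\neq\emptyset$ then $\partial D_W$ bounds a subdisk $\Delta''\subset\Delta'_\delta$, and $D_W\cup\Delta''$ would be a $2$-sphere meeting $K$ transversely in exactly one point (the interior puncture of $D_W$), which is impossible in $S^3$. You need this, or some equivalent verification, for the surgery step to be legitimate; the same check recurs for the arc components (there the paper notes directly that the outermost subdisk is disjoint from $D_W$). With that addition your argument matches the paper's proof.
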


\begin{proof}[Proof of Claim \ref{claim:almostccdisk}]
By our assumption (ii) and transversality (T2), on the almost compressing disk $D_{B}$, the connected components of $W \cap D_{B}$ are classified into the following three types.
\begin{enumerate}
\item[(a)] A simple closed curve in $D_{B}$.
\item[(b)] An embedded arc connecting two points on $\partial D_B$.
\item[(c)] An arc connecting a point $p_{D_B}=K\cap D_{B}$ and a point on $\partial D_{B}$.
\end{enumerate}

By our assumption (ii-c), $\gamma_{D_{B}} \subset B \cap W \subset W$, so $\gamma_{D_{B}} \subset D_{B}\cap W$. This shows that a component of type (c) is nothing but the intersection arc $\gamma_{D_{B}}$. 

A simple closed curve component $\delta$ of type (a) bounds a disk $\Delta_{\delta}$ in $D_{B} \setminus \gamma_{D_B}$.
Take an innermost $\delta$ so that the interior of $\Delta_{\delta}$ is disjoint from $W$. Since $W$ is incompressible by Claim \ref{claim:A}, $
\delta$ bounds a disk $\Delta'_{\delta}$ in $W$ and $\Delta_{\delta} \cup \Delta'_{\delta}$ is a 2-sphere bounding a 3-ball $M$ in $S^{3} \setminus K$.

We show that $M \cap D_{W} = \emptyset$. Assume to the contrary that $M \cap D_{W} \neq \emptyset$. Then $\partial D_{W} \subset \Delta'_{\delta}$ so it bounds a subdisk $\Delta'' \subset \Delta'_{\delta}$. Then $D_{W} \cup \Delta''$ gives a sphere that transversely intersect with the knot $K$ at exactly one point, which is impossible.

Therefore we can remove $\delta$ by surgerying $W$ along $\Delta_{\delta}$ (see Figure \ref{fig:surgery} (1)), that is, by pushing $\Delta'_{\delta}$ across the 3-ball $M$. Since $D_{W} \cap M = \emptyset$, this surgery does not affect $D_{W}$ so the condition (ii) is still satisfied. This shows that one can remove all the type (a) components of  $W \cap D_{B}$.

Similarly, an arc component $\delta$ of type (b) cuts $D_{B}$ into a smaller disk $\Delta_{\delta}$ that does not contain $p_{D_{B}}$. Take an outermost $\delta$ so that the interior of $\Delta_{\delta}$ is disjoint from $W$. Then we push $W$ along $\Delta_{\delta}$ to remove the intersection $\delta$  (see Figure \ref{fig:surgery} (2)). Since $D_{W} \cap \Delta_{\delta} = \emptyset$, this surgery does not affect  $D_{W}$ so the condition (ii) is still satisfied.

Therefore we conclude that one can remove all the type (a) and (b) components of $W \cap D_{B}$. Thus $W \cap D_{B}$ consisits of a connected component of type (c) so $W \cap D_{B}=\gamma_{D_B}$.

 By the same argument we put $B$ so that $B \cap D_{W} = \gamma_{D_W}$.

\begin{figure}[htb]
\begin{center}
\includegraphics*[bb= 64 609 422 740,width=110mm]{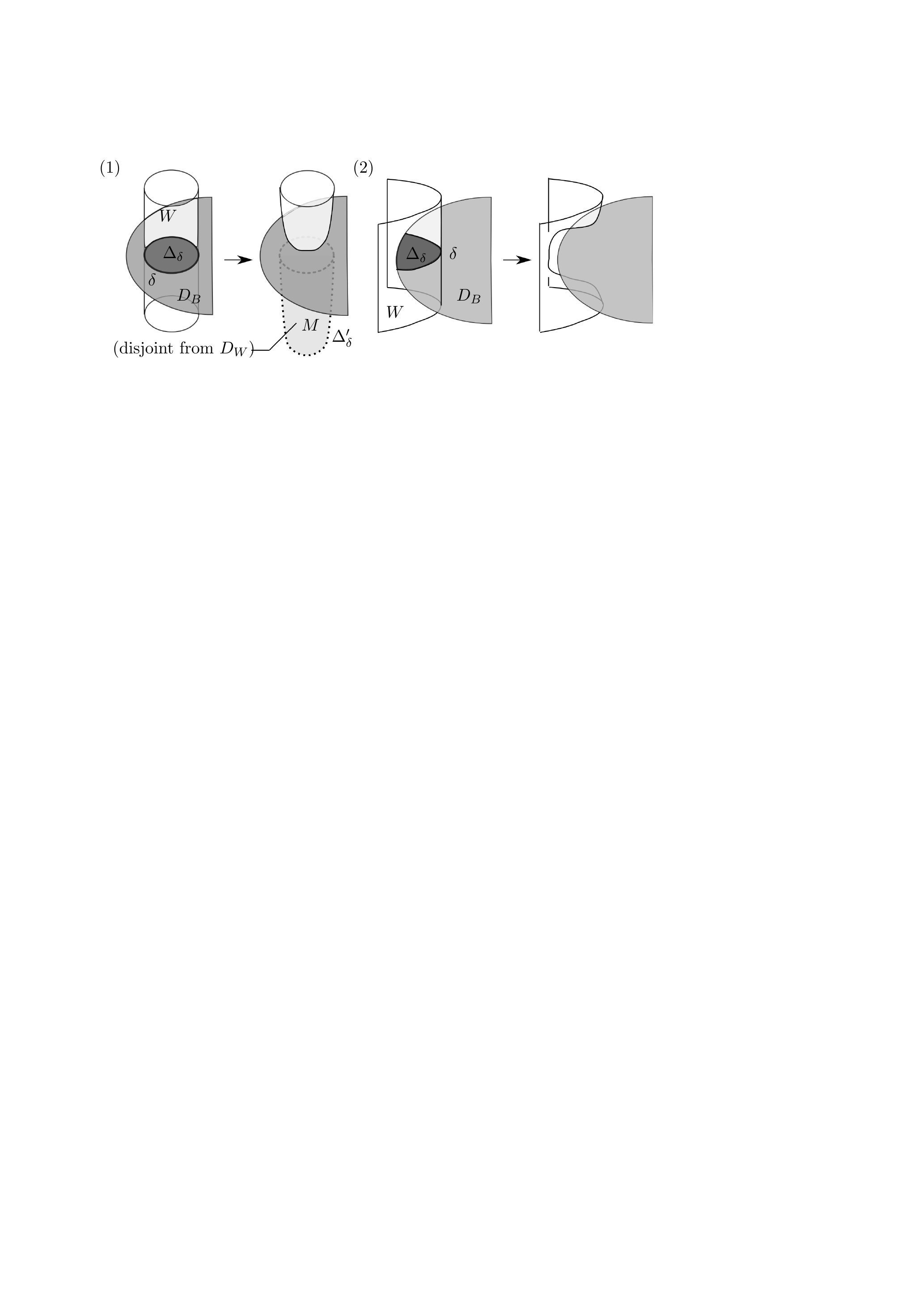}
\caption{Simplifying the intersection $W \cap D_{B}$.}
\label{fig:surgery}
\end{center}
\end{figure}

\end{proof}

Since $B$ and $W$ are embedded, by transversality each connected component of $B_{X}\cap W_{X}$ is either an arc or a simple closed curve. We denote by $\mathcal{A}$ (resp. $\mathcal{C}$) the set of the connected components of $B_{X}\cap W_{X}$ which is an arc (resp. a simple closed curve).

\begin{claim}
\label{claim:essential}
One can put $B$ and $W$ so that no circle $C \in \mathcal{C}$ bounds a disk in $B$ or $W$, preserving the property $W \cap D_{B} = \gamma_{D_{B}}$ and $B \cap D_{W} = \gamma_{D_{W}}$.
\end{claim}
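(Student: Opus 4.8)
The plan is to use the standard innermost-disk / outermost-arc surgery technique, exactly as in the proof of Claim~\ref{claim:almostccdisk}, but now applied to circle components of $B_X \cap W_X$ that bound disks in one of the two surfaces. First I would suppose some circle $C \in \mathcal{C}$ bounds a disk $\Delta \subset B$ (the case $\Delta \subset W$ being symmetric). Among all such circles, choose one whose bounding disk $\Delta$ in $B$ is innermost, so that the interior of $\Delta$ is disjoint from $W$. Since $W$ is incompressible by Claim~\ref{claim:A}, the circle $C$ also bounds a disk $\Delta'$ in $W$, and $\Delta \cup \Delta'$ is a $2$-sphere bounding a $3$-ball $M$ in $S^3 \setminus K$ (using that $S^3$ is irreducible and that $C$, lying in $X_K$, is disjoint from $K$).

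The key point, as in Claim~\ref{claim:almostccdisk}, is to arrange that the surgery which removes $C$ does not disturb the already-established normal form $W \cap D_B = \gamma_{D_B}$ and $B \cap D_W = \gamma_{D_W}$. The plan is to show $M \cap D_B = \emptyset$ and $M \cap D_W = \emptyset$: if either almost compressing disk met $M$, then since $D_B$ (resp. $D_W$) meets $K$ transversely in exactly one point and $M \subset S^3 \setminus K$, its boundary curve would have to lie on $\partial M = \Delta \cup \Delta'$, hence bound a subdisk there, and capping off would produce a sphere meeting $K$ in exactly one point --- impossible. (If $C$ instead happens to be one of the circles $\partial D_B$ or $\partial D_W$, it meets $K$, contradicting $C \subset X_K$; and $\Delta$ cannot contain the intersection arcs since those already lie in $B \cap W$ and meet such circles only as prescribed.) Given $M \cap (D_B \cup D_W) = \emptyset$, we surger $W$ by pushing $\Delta'$ across $M$ to $\Delta$ and slightly beyond; this removes $C$ (and possibly other circles of $\mathcal{C}$ lying on $\Delta$, which only helps), strictly decreases $|\mathcal{C}|$, and leaves $D_B$, $D_W$, the condition (ii), and the normal forms $W \cap D_B = \gamma_{D_B}$, $B \cap D_W = \gamma_{D_W}$ all intact. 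Iterating until no such circle remains proves the claim.

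The main obstacle I anticipate is the bookkeeping around the almost compressing disks: unlike a genuine compressing-disk argument, here $\partial D_B$ and $\partial D_W$ are essential curves meeting $K$, and the intersection arcs $\gamma_{D_B}, \gamma_{D_W}$ sit inside $B \cap W$, so one must check carefully that an innermost disk $\Delta \subset B$ can be taken disjoint from these arcs (otherwise a circle of $\mathcal{C}$ and an arc of $\mathcal{A}$ would intersect, contradicting that they are distinct components of $B_X \cap W_X$) and that the surgery region $M$ genuinely avoids $D_B \cup D_W$. Once the disjointness $M \cap (D_B \cup D_W) = \emptyset$ is secured, the surgery is routine and the induction on $|\mathcal{C}|$ closes immediately. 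A secondary subtlety is making sure the surgery is performed within $X_K$ (equivalently, that the resulting surface is still a spanning surface of $K$ with the same boundary slope data), which follows because $M$ is disjoint from $K$.
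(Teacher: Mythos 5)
Your proposal is correct and follows essentially the same route as the paper: take an innermost circle bounding a disk on one surface, use incompressibility of the other surface (Claim \ref{claim:A}) to get a sphere bounding a ball $M$ disjoint from $K$, verify $M \cap (D_B \cup D_W) = \emptyset$ using the normal form from Claim \ref{claim:almostccdisk} together with the ``sphere meeting $K$ once'' contradiction, and then surger across $M$. The only cosmetic difference is that you argue disjointness of $M$ from both almost compressing disks uniformly via their boundaries, whereas the paper handles $D_W$ by the one-point-of-$K$ argument and $D_B$ by noting $D_B \cap W = \gamma_{D_B}$ lies in an arc of $\mathcal{A}$ disjoint from the surgery disks; both reductions rest on the same facts.
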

\begin{proof}
Assume to the contrary that a circle $\delta \in \mathcal{C}$ bounds a disk $\Delta_{\delta}$ in $W$ (the case $\delta$ bounds a disk in $B$ is similar). Take an innermost one so that the interior of $\Delta_{\delta}$ is disjoint from $B$. Since $W$ is incompressible by Claim \ref{claim:A}, $C$ bounds a disk $\Delta'_{\delta} \subset W$ and $\Delta_{\delta} \cup \Delta'_{\delta}$ is a 2-sphere bounding a 3-ball $M$ in $S^{3} \setminus K$. By the same argument as Claim \ref{claim:almostccdisk}, $M$ is disjoint from $D_{W}$.
 
By Claim \ref{claim:almostccdisk}, $D_{B} \cap W = \gamma_{D_{B}}$. This shows that $(D_{B} \cap W) \cap X_K$ is contained in an arc in $\mathcal{A}$. Since $\Delta'_{\delta} \cap \gamma =\emptyset$ for any arc $\gamma \in \mathcal{A}$, this shows that $\Delta'_{\delta} \cap D_{B} = \emptyset$. Therefore the 3-ball $M$ is also disjoint from $D_{B}$.

Thus by surgerying $W$ along $\Delta_{\delta}$ (by pushing $\Delta'_{\delta}$ across the 3-ball $M$, we can remove the intersection circle $C$, without affecting $W\cap D_{B}$ and $B \cap D_{W}$. 
\end{proof}

Each arc component $\gamma \in \mathcal{A}$ of $B_{X}\cap W_{X}$ naturally extends to an arc in $B\cap X$ connecting two different points in the knot $K$ which we denote by $\overline{\gamma}$. By \cite[Lemma 1]{ho}, such an intersection arc is `standard', in the following sense. 
\begin{itemize}
\item[(St)]
Each arc $\overline{\gamma}$ is locally homeomorphic to the intersections of two checkerboard surface near the famous ``Menasco crossing ball'' (see Figure \ref{fig:crossball}.)
\end{itemize}

\begin{figure}[htb]
\begin{center}
\includegraphics*[bb= 84 624 337 725,width=70mm]{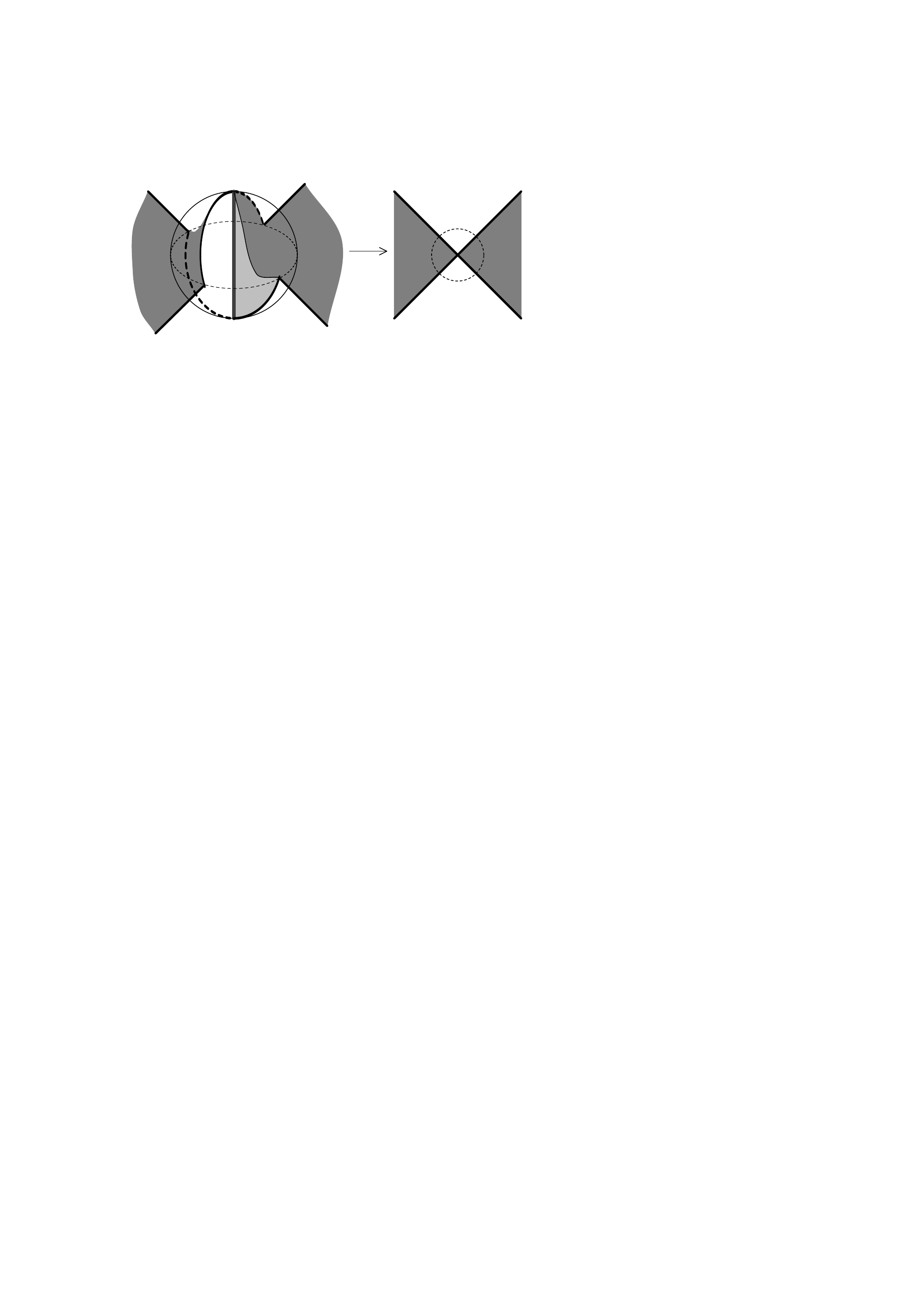}
\caption{Local model of intersection of $B$ and $W$ -- contracting intersection $\overline{\gamma}$ we get an immersed surface.}
\label{fig:crossball}
\end{center}
\end{figure}

Thus by collapsing arcs $\overline{\gamma}$ to a point, we get an immersed surface $\iota: F \rightarrow S^{3}$.

\begin{claim}
\label{claim:achi}
$\chi(F)=0$.
\end{claim}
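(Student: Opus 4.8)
The claim $\chi(F)=0$ should follow from an Euler characteristic bookkeeping argument combined with the two constraints we have available: the numerical condition (i), which by the preceding discussion must hold with equality or near-equality, and the almost-compressing-disk condition (ii). First I would compute $\chi(F)$ directly. Since $F$ is obtained from $B \cup W$ by collapsing each arc $\overline{\gamma}$ (for $\gamma \in \mathcal{A}$) to a point and no longer worrying about the circle intersections (which by Claim \ref{claim:essential} may be taken to be essential on both sides, but still contribute), I would set up the inclusion-exclusion: $\chi(F) = \chi(B) + \chi(W) - \chi(B \cap W) + (\text{correction for the collapsing})$. Collapsing an arc to a point raises $\chi$ by $1$ per arc, while the circle components of $\mathcal{C}$ have $\chi = 0$ and contribute nothing. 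So $\chi(F) = \chi(B) + \chi(W) - |\mathcal{A}| \cdot 0 \ldots$ — I need to be careful: each arc $\overline{\gamma}$ is glued into both $B$ and $W$ along its whole length, so $\chi(B \cup W) = \chi(B) + \chi(W) - \chi(\mathcal{A} \cup \mathcal{C}) = \chi(B)+\chi(W) - |\mathcal{A}|$, and then collapsing $|\mathcal{A}|$ arcs each adds $1$, giving $\chi(F) = \chi(B) + \chi(W) = (1 - b_1(B)) + (1 - b_1(W)) = 2 - b_1(B) - b_1(W)$.

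\textbf{Main step.} So the content of the claim is exactly that $b_1(B) + b_1(W) = 2$, i.e. condition (i) holds with equality \emph{and} $\sigma(W) = \sigma(B)$. This is where conditions (i) and (ii) must be combined. The inequality (i) gives $b_1(W) + b_1(B) \leq 2 + |\sigma(W) - \sigma(B)|$. For the reverse I would argue that the immersed surface $F$, together with the almost compressing disks, forces the relevant pairings to be definite after a controlled modification — this is the point where the proof should parallel Greene's definiteness argument as the introduction promises. Concretely, I expect one shows that the presence of $D_B$ and $D_W$ satisfying (ii) lets one cap off or surger $B$ and $W$ (using the disks to compress across $K$) to produce spanning surfaces of a \emph{genuinely alternating} knot, or directly that $F$ carries a nondegenerate definite Gordon-Litherland-type form; combined with the signature formula $\sigma(K) = \sigma(S) + \tfrac12 e(S)$ this pins down $b_1(W) + b_1(B) - |\sigma(W)-\sigma(B)|$ from below by $2$ (recall assumption (D) excludes the value $0$, and the value must be even, so it is $\geq 2$). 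Together with (i) this yields $b_1(W)+b_1(B)-|\sigma(W)-\sigma(B)| = 2$, and then the definiteness forces $\sigma(W) = \sigma(B)$, hence $b_1(B)+b_1(W) = 2$ and $\chi(F) = 0$.

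\textbf{Anticipated obstacle.} The hard part will be establishing the lower bound $b_1(W) + b_1(B) - |\sigma(W) - \sigma(B)| \geq 2$ — equivalently, ruling out that this quantity equals $1$ (a parity argument should handle this cleanly, since $b_1(W) + b_1(B)$ and $\sigma(W) - \sigma(B) = b(D) - a(D)$ have matching parities in the checkerboard picture, so the quantity is always even) — and, more substantively, ensuring that $\sigma(W) = \sigma(B)$ rather than merely $b_1(W) + b_1(B) = 2 + |\sigma(W) - \sigma(B)|$ with a large common value. I suspect the cleanest route is to avoid computing signatures at all and instead argue topologically: show that collapsing produces $F$ with $\chi(F) \geq 0$ from one side (an immersed-surface Euler characteristic inequality coming from the fact that $F$ is built from two incompressible surfaces whose arc-intersections are standard), and $\chi(F) \leq 0$ from the other (using (D) to exclude the degenerate alternating case where $\chi(F) = 2$, i.e. $F$ a sphere). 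If both bounds hold the claim is immediate. I would try the topological route first and fall back on the Gordon-Litherland computation only if the direct argument stalls.
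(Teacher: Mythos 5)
Your Euler-characteristic bookkeeping goes wrong at the start, and the error derails the rest of the plan. The intersection $B\cap W$ is not just the disjoint arcs: it contains the whole knot $K=\partial B=\partial W$ together with the arcs $\overline{\gamma}$ attached to it, i.e.\ a circle with $|\mathcal{A}|$ chords, whose Euler characteristic is $-|\mathcal{A}|$ (the circle components in $\mathcal{C}$ contribute $0$). Moreover, collapsing a contractible arc to a point is a homotopy equivalence and does \emph{not} raise $\chi$ by $1$. The correct computation is therefore $\chi(F)=\chi(B\cup W)=\chi(B)+\chi(W)-\chi(B\cap W)=2-b_1(B)-b_1(W)+|\mathcal{A}|$, and the whole point of the claim is the identity $|\mathcal{A}|=\tfrac12\#(\lambda_B\cap\lambda_W)\cdot(\text{const})=|\tfrac12 e(W)-\tfrac12 e(B)|=|\sigma(B)-\sigma(W)|$, which comes from the efficiency assumption (T1) and the Gordon--Litherland formula $\sigma(K)=\sigma(S)+\tfrac12 e(S)$. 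With that term restored, $\chi(F)=2-\bigl(b_1(B)+b_1(W)-|\sigma(B)-\sigma(W)|\bigr)$, so condition (i) gives $\chi(F)\ge 0$, assumption (D) gives $\chi(F)\ne 2$, and the parity fact $\sigma(S)\equiv b_1(S)\pmod 2$ (from nondegeneracy of the Gordon--Litherland pairing) forces $\chi(F)=0$. You did correctly anticipate the parity step and the role of (D), so the skeleton of the endgame is right.

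Because you dropped the $|\sigma(B)-\sigma(W)|$ term, you concluded that the claim is equivalent to $b_1(B)+b_1(W)=2$ together with $\sigma(B)=\sigma(W)$. That statement is false in general (for the checkerboard surfaces of an almost alternating diagram one has $b_1(B)+b_1(W)=c(D)+2$, which is large), so the ``main step'' of your proposal sets out to prove something unprovable, and the speculative definiteness/surgery argument you sketch to establish it cannot be repaired. No definiteness input is needed anywhere in this claim: once the Euler characteristic is computed correctly, conditions (i), (D), and the parity of $b_1(S)-\sigma(S)$ suffice.
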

\begin{proof}[Proof of \ref{claim:achi}]
By the transversality assumption (T1) and the definition of the euler number, $\#(\lambda_{B}\cap \lambda_{W}) = |\frac{1}{2}e(W) -\frac{1}{2}e(B)|$. Therefore 
\[ \chi(B\cap W) = \chi\left(K \cup \sum_{\gamma \in \mathcal{A}} \overline{\gamma} \right)= -\sharp(\lambda_{B} \cap \lambda_{W}) =- \left| \frac{1}{2}e(W)-\frac{1}{2}e(B)\right| =- |\sigma(B)-\sigma(W)|.\]
 Hence we conclude
\[ \chi(F) = \chi(B \cup W) = \chi(B) +\chi(W) -\chi(B\cap W) = 2- b_{1}(W)-b_{1}(B) + |\sigma(B)-\sigma(W)|. \]
By our assumption (i) and (D), $ 0 \leq \chi(F) \leq 1$.

On the other hand, for a spanning surface of a knot $K$, the Gordon-Litherland pairing is non-degenerate so $\sigma(S) \equiv b_1(S)$ (mod $2$). Therefore $\chi(F)$ must be even hence $\chi(F)=0$. 
\end{proof}

We denote the collapsing map by $p : B\cup W \rightarrow \iota(F)$ and its extension to ambient space by $\overline{p}:S^{3} \rightarrow S^{3}$. By abuse of notation, we denote the image of double point circles $p(\mathcal{C}) \subset \iota(F)$ by the same symbol $\mathcal{C}$.

Let $G=p(K)$ and $\widetilde{G}:= \iota^{-1}(G)$ be the four-valent graph in $F$.
We assign the black or white color on each complementary region $F \setminus \widetilde{G}$ so that no two adjacent regions have the same color, and that $p^{-1}(B_F) = B$ and $p^{-1}(W_F) = W$ holds. Here $B_{F}, W_{F} \subset F$ denotes the black and white colored regions of $F$.

By assumption (ii-c), $\gamma_{D_B} \cup \gamma_{D_{W}} \subset B\cap W$
so it is identified with $\overline{\lambda}$ for some $\lambda \in \mathcal{A}$. Thus $\gamma_{D_B} \cup \gamma_{D_{W}}$ is sent to a four-valent vertex $v^{*}$ of $G$ under the collapsing map $p$. We consider the simple closed curves $\delta_{B}=p(\partial D_{B}) \subset B_F$, $\delta_{W}=p(\partial D_{W}) \subset W_F$. By Claim \ref{claim:almostccdisk}, they have the following properties.
\begin{itemize}
\item[(B1)] $\delta_{B}$ and $\delta_{W}$ transversely intersect at exactly one point $v^{*}$, and
\item[(B2)] $\delta_{B}$ and $\delta_{W}$ are disjoint from double point circles $\mathcal{C}$.
\end{itemize}

\begin{claim}
\label{claim:emb}
$F$ is an embedded torus.
\end{claim}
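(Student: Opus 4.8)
The plan is to show that $F$, which we already know satisfies $\chi(F)=0$ by Claim \ref{claim:achi}, is a closed orientable surface, hence a torus, and then that it is embedded rather than merely immersed. The surface $F$ is obtained from $B \cup W$ by collapsing the arc components $\overline{\gamma}$, $\gamma \in \mathcal{A}$, to points; the only possible failure of embeddedness comes from the double point circles $\mathcal{C}$, and the only obstruction to being closed is that $F$ might have boundary or be non-orientable. First I would verify that $F$ is closed: since $B$ and $W$ are spanning surfaces of $K$ glued along $K$ (together with the arcs $\overline{\gamma}$), the image $\iota(F)$ has no boundary, and the four-valent graph $\widetilde{G}$ is the image of $K$.

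\smallskip

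Next I would rule out the remaining double point circles $\mathcal{C}$. By Claim \ref{claim:essential} no circle in $\mathcal{C}$ bounds a disk in $B$ or in $W$, so each such circle is essential in both checkerboard surfaces. The key point is to use the almost compressing disks and the curves $\delta_B = p(\partial D_B)$, $\delta_W = p(\partial D_W)$, which by (B1) meet transversely at the single vertex $v^*$ and by (B2) are disjoint from $\mathcal{C}$. I expect the argument to run as follows: $F$ is a closed surface with $\chi(F)=0$, so it is a torus or a Klein bottle; the curves $\delta_B, \delta_W$ generate (a finite-index subgroup of) $H_1(F)$ since they meet once. If $\mathcal{C}$ were nonempty, a circle $C \in \mathcal{C}$ would have to be disjoint from both $\delta_B$ and $\delta_W$, forcing $[C]=0$ in $H_1(F;\Q)$ (on a torus any class meeting both generators trivially is trivial), hence $C$ is separating in $F$. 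But $C$ is essential in the checkerboard surfaces $B$ and $W$, and one checks that a separating essential curve in $F$ that lifts to an essential curve in both $B$ and $W$ is incompatible with $B$ and $W$ being the preimages $p^{-1}(B_F)$, $p^{-1}(W_F)$ of the black and white regions — tracing $C$ through the collapsing map $p$ shows $C$ must already bound a disk on one side, contradicting Claim \ref{claim:essential}. Once $\mathcal{C} = \emptyset$, the map $\iota : F \to S^3$ has no double points except possibly along the collapsed vertices, and since the local model near each collapsed arc $\overline{\gamma}$ is the standard Menasco crossing-ball picture (St), $\iota$ is in fact an embedding.

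\smallskip

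Finally I would settle orientability. Having shown $F$ is an embedded closed surface in $S^3$ with $\chi(F)=0$, and noting that $S^3$ contains no embedded Klein bottle (it is non-orientable and $S^3$ has no non-trivial $\Z/2$ homology, so no closed non-orientable surface embeds), we conclude $F$ is an embedded torus. Alternatively, orientability of $F$ can be argued directly from the checkerboard structure: $F = B_F \cup W_F$ with the two families of regions meeting along $\widetilde{G}$, and the way the twisted bands of $B$ and $W$ fit together at each four-valent vertex of $G$ forces a consistent coorientation, so $F$ is two-sided in $S^3$, hence orientable.

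\smallskip

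The main obstacle I anticipate is the elimination of the double point circles $\mathcal{C}$: the homological argument gives that each such circle is separating in $F$, but converting ``separating in $F$ yet essential in $B$ and $W$'' into an actual contradiction requires carefully analyzing how $p$ restricts over a neighborhood of $C$ and invoking incompressibility (Claim \ref{claim:A}) and the disk-compatibility with $D_B, D_W$ (Claims \ref{claim:almostccdisk} and \ref{claim:essential}) one more time. Everything else — $\chi(F)=0$, closedness, orientability, embeddedness once $\mathcal{C}=\emptyset$ — should be comparatively routine.
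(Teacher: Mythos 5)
Your overall strategy is the right one, and your inventory of inputs ($\chi(F)=0$, Claim \ref{claim:essential}, the properties (B1) and (B2) of $\delta_B$ and $\delta_W$) matches what the paper uses. But the central step --- deriving a contradiction from a nonempty $\mathcal{C}$ --- is exactly the part you leave unfinished, and the route you sketch for it has a real gap of logical ordering. Your homological argument (``a class meeting both generators trivially is trivial, hence $C$ is separating, hence bounds a disk on one side'') is run under the assumption that $F$ is a torus, yet at that stage $F$ could still be a Klein bottle: you only exclude the Klein bottle at the very end, via embeddedness, which in turn depends on having eliminated $\mathcal{C}$. On a Klein bottle a simple closed curve that is trivial in $H_1(F;\Z/2)$ is indeed separating, but it need not bound a disk --- it can split the Klein bottle into two M\"obius bands --- so the intended contradiction with Claim \ref{claim:essential} evaporates in that branch. (Your fallback, that orientability of $F$ follows ``directly from the checkerboard structure,'' cannot be taken for granted either: before embeddedness is known there is no reason the immersed surface is two-sided, and the paper itself only kills the Klein bottle by quoting that it does not embed in $S^3$.) In the torus case your sketch does close up: the disk bounded by $C$ cannot contain the connected graph $\widetilde{G}$, since it would then contain both essential curves $\delta_B,\delta_W$; hence it lies in a single color region, and an innermost such disk contradicts Claim \ref{claim:essential}. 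So the missing piece is precisely the non-orientable branch, plus the ``tracing $C$ through $p$'' step you explicitly flag as the main obstacle.

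The paper's argument is insensitive to orientability and is worth comparing. The preimages of the double point circles are essential (Claim \ref{claim:essential}) and disjoint from $\widetilde{G}$, so they cut $F$ (torus or Klein bottle) into annuli and M\"obius bands $A_1,\dots,A_m$; the connected graph $\widetilde{G}$ lies in a single piece $A_1$, so all other pieces sit inside one color region, and since each double point circle contributes one preimage circle to the black part and one to the white part, a counting argument forces $m=1$ with $A_1$ an annulus; finally $\iota^{-1}(\delta_B)$ and $\iota^{-1}(\delta_W)$ are two simple closed curves in that annulus meeting transversely in a single point, which is impossible. If you prefer to stay close to your homological approach, the repair is to argue uniformly that the complement of a regular neighborhood of $\delta_B\cup\delta_W$ (which has Euler characteristic $-1$) is a disk, so that any circle disjoint from both $\delta_B$ and $\delta_W$ bounds a disk in $F$ whether $F$ is a torus or a Klein bottle; but some argument of this kind must be supplied before the claim is proved.
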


\begin{proof}[Proof of Claim \ref{claim:emb}]

By Claim \ref{claim:achi}, $F$ is a torus or a Klein bottle.
Assume to the contrary that $F$ is not embedded. By Claim \ref{claim:essential} every preimage of a double point circle is an essential simple closed curve in $F$. Hence they cut $F$ into annuli or M\"obius bands $A_{1},\ldots, A_{m}$. The graph $\widetilde{G}$ is connected and disjoint from double point circles. Hence it is contained in exactly one component, say $A_{1}$. Thus we may assume that all the other components $A_{2},\ldots,A_{m}$ are contained in a black colored region $B_{F}$. The double point curve is an intersection of $B$ and $W$ so the number of preimage of double point circles in $W_{F}$ and in $B_{F}$ must be the same. This forces to $m=1$ and $A_{1}$ is an annulus.

On the other hand, by (B1) and (B2), $\iota^{-1}(\delta_{B}), \iota^{-1}(\delta_{W})$ are simple closed curves in $A_{1}$ that transversely intersect at exactly one point $v^{*}$. This is a contradiction because an annulus cannot contain such simple closed curves.
 Since the Klein bottle cannot be embedded into $S^{3}$, $F$ is an embedded torus.
\end{proof}

By construction of the surface $F$, the knot $K$ is contained in a neighborhood of $F$. By (T1), the projection of $K$ on $F$ yields an alternating diagram $D$ on $F$. By (B1) and (B2), $\delta_{B}$ and $\delta_{W}$ are simple closed curves which cuts $F$ into a disk. Moreover by Claim \ref{claim:almostccdisk} they bound disks $p(D_{B})$ and $p(D_W)$ in $S^{3}$ whose interiors are disjoint from $F$. Thus $F$ is a standard embedding. Also, the diagram on $F$ is cellular, because otherwise there exists a simple closed curve contained in $B$ or $W$ which is isotopic to $\delta_{W}$ or $\delta_{B}$. Existence of such loop implies $B$ or $W$ is compressible, which contradicts with Claim \ref{claim:A}.

Now by cutting $F$ along $\delta_{B}$ and $\delta_{W}$ we get an alternating tangle in the square, and the diagram on $F$ can be recovered by gluing their edges (see Figure \ref{fig:rdiagram} (a,b)). The resulting alternating diagram on a torus $F$ can be seen as an almost alternating diagram Figure \ref{fig:rdiagram} (c), with almost alternating crossing which corresponds to $v^{*}$.

\begin{figure}[htb]
\begin{center}
\includegraphics*[bb=77 655 393 742,width=110mm]{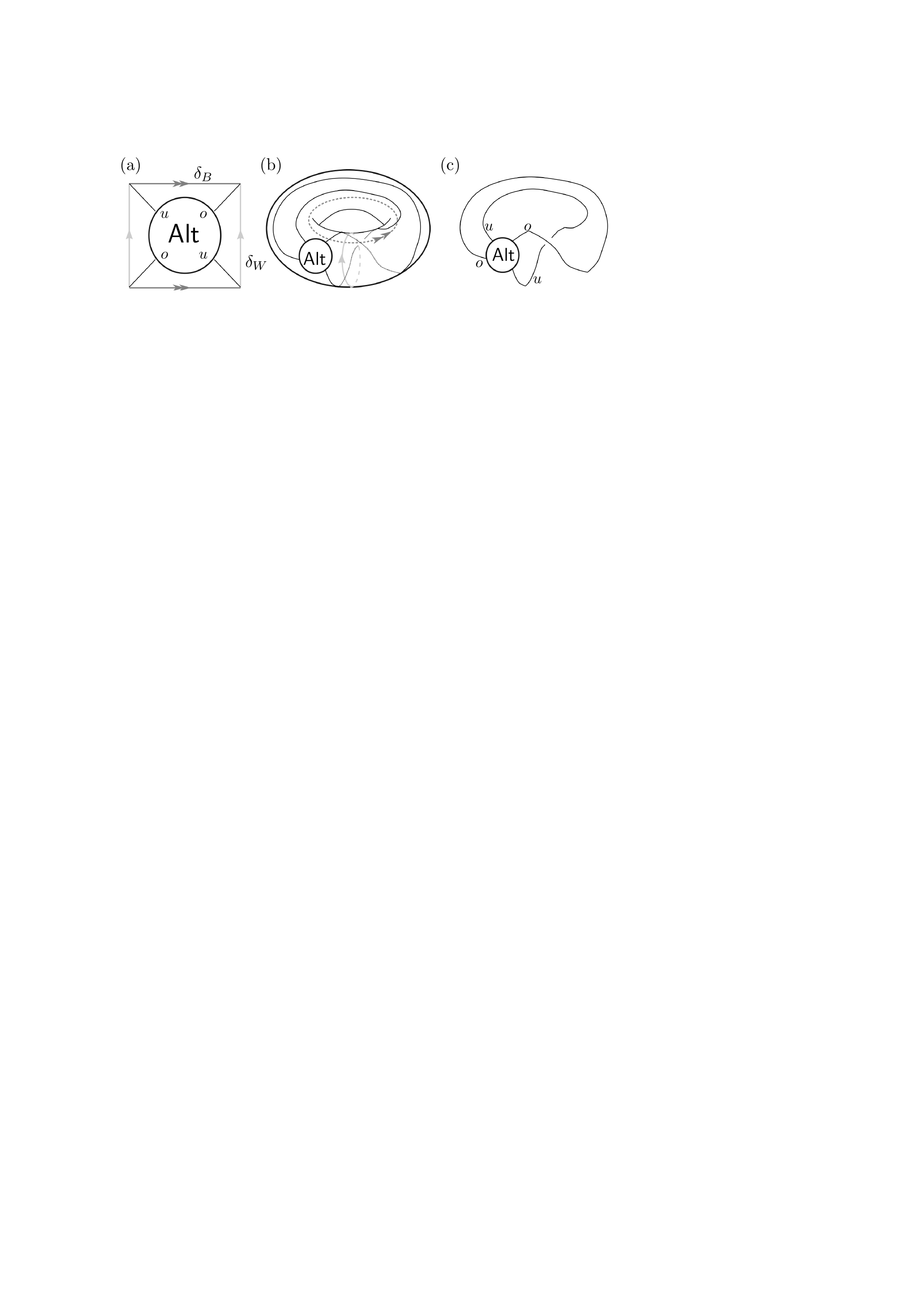}
\caption{(a) Cutting a diagram on $F$ along $\delta_{B}$ and $\delta_{W}$. (b) The resulting alternating diagram on $F$.  (c) Almost alternating diagram from alternating diagram on $F$,.}
\label{fig:rdiagram}
\end{center}
\end{figure}

\end{proof}

\begin{remark}
\label{remark:tor}
As one can see in the proof, we used almost compressing disks not only to control the alternating diagram on torus $F$, but also to prove Claim \ref{claim:emb}, the surface $F$ constructed from spanning surfaces $B$ and $W$ is an embedded torus. In fact, Howie presents a construction of knot with spanning surfaces $B$ and $W$ such that $b_{1}(W)+b_{1}(B)-|\sigma(W)-\sigma(B)| = 2$ and that they do not give rise to an alternating diagram on embedded torus \cite{hothe}. 

Toroidally alternating links and their generalizations, a knot admitting alternating diagram on surface with several additional properties, are extensively discussed in \cite{hothe}. 
\end{remark}

\begin{remark}
\label{remark:link}
In our proof, we used the assumption that $K$ is a knot, only to guarantee the property (St) of the standardness of the intersections of $B$ and $W$. As noted in \cite{ho,hothe}, for a non-split link case, to guarantee the standardness (St) it is sufficient to add an assumption that 
\begin{itemize}
\item[(iii)] The intersection of $\lambda_B$ and $\lambda_{W}$ have the same sign.
\end{itemize}
Thus, by adding the assumption (iii) we have a characterization of almost alternating links as well.
\end{remark}

\end{document}